\newtheorem{theorem}{Theorem}[section]
\newtheorem{lemma}[theorem]{Lemma}
\newtheorem{corollary}[theorem]{Corollary}
\theoremstyle{remark}
\newtheorem{remark}[theorem]{\em \bf Remark}
\numberwithin{equation}{section}
\begin{document}

\title {Hyperstability of Jordan triple derivations on Banach algebras}

\author{Sang Og Kim}
\address{Department of Mathematics, Hallym University, Chuncheon 200-702,
Republic of Korea}
\email{sokim@hallym.ac.kr}

\author{ Abasalt Bodaghi}
\address{Department of Mathematics, Garmsar Branch, Islamic Azad University, Garmsar, Iran}
\email{abasalt.bodaghi@gmail.com}


\abstract{In this article, it is proved that a functional equation of (linear) Jordan triple derivations on unital Banach
algebras  under quite natural and simple assumptions is hyperstable. It is also shown that under some mild conditions approximate Jordan triple derivations on unital semiprime Banach algebras are (linear) derivations.}
\endabstract

\subjclass[2010]{Primary: 39B52, 39B82, 46L57, 47B47}

\keywords{Hyers-Ulam stability, hyperstability, Jordan triple derivation, semiprime algebra  \\Corresponding author: A. Bodaghi}

\maketitle

\section{Introduction}
The stability problem of functional equations originated from a question of Ulam \cite{ulm} in 1940, concerning the stability of group homomorphisms: Let $G_1$ be a group and $G_2$ be a metric group with metric $d$. Given $\epsilon > 0$, does there exist a $\delta> 0$ such that if a mapping $\phi:G_1\longrightarrow G_2$ satisfies the inequality $d(\phi(st),\phi(s)\phi(t))<\delta$ for all $s,t\in G_1$, then there exists a homomorphism $\psi:G_1\longrightarrow G_2$ with $d(\phi(s),\psi(s))<\epsilon$ for all $s\in G_1$? In other words, under what conditions does there exist a homomorphism near an approximate homomorphism?  In 1941, Hyers \cite{hye} gave the first affirmative answer to the problem of Ulam for Banach spaces. The result states that
if $\delta>0$ and $f:X\longrightarrow Y$ is a mapping between Banach spaces $X,Y$ such that $\Vert f(x+y)-f(x)-f(y)\Vert\leq\delta$
for all $x,y\in X$, then there exists a unique additive mapping $T:X\longrightarrow Y$ such that $\Vert f(x)-T(x)\Vert\leq\delta$ for all $x\in X$.
Hyers' result was generalized by Aoki \cite{aok} for additive mappings and by Rassias \cite{ras} for linear mappings by considering an unbounded Cauchy difference. The paper of Rassias had a lot of influence on the development of what we call generalized Hyers-Ulam stability or Hyers-Ulam-Rassias stability of functional equations. Forti \cite{for1} and G\v{a}vruta \cite{gav} provided further generalizations of the Rassias' theorem. In a similar fashion, one can define many approximate  mappings. During the last decades several stability problems of functional equations were investigated. A large list of references concerning the  stability of various functional equations can be found e.g. in the books \cite{cze,hir,jun}.

Let $X$ be a real or complex algebra. An additive mapping $D:X\longrightarrow X$ is called a {\it derivation} if
$D(xy)=D(x)y+xD(y)$ for all $x,y\in X$. An additive mapping $D:X\longrightarrow X$ is said to be a {\it Jordan derivation} if $D(x^2)=D(x)x+xD(x)$ for all $x\in X$.  Also, an additive mapping $D:X\longrightarrow X$ is called a {\it Jordan triple derivation} if
$$ D(xyx)=D(x)yx+xD(y)x+xyD(x)\qquad(x,y\in X).$$
If moreover $D(\lambda x)=\lambda D(x)$ for all $\lambda$ and $x\in X$, then the definitions of {\it linear derivation, linear Jordan derivation} and {\it linear Jordan triple derivation} are self-explanatory.
It is easy to show that the following implications hold: derivation $\Rightarrow$ Jordan derivation $\Rightarrow$ Jordan triple derivation.
But the converse implications do not hold in general. The topic of approximate for various notions of derivations was studied by a number of mathematicians; see for example, \cite{bod},  \cite{boda}, \cite{bodp}, \cite{bad},  \cite{ene},  \cite{jnp},  \cite{pab},  \cite{ybk} and references therein.

Recall that in a situation where an approximate mapping must be a true mapping, we say that the functional equation of the mapping is {\it hyperstable}.

In this paper, we show that the functional equation of (linear) Jordan triple derivations on unital Banach algebras under what conditions is hyperstable. As some corollaries, we obtain that some approximate Jordan triple derivations on unital semiprime Banach algebras are (linear) derivations.

\section{Hyperstability of Jordan triple derivations I}

In this section we show that under some mild conditions the functional equation of Jordan triple derivations on unital Banach algebras can be hyperstable.
The following lemma is a special case of the general result presented by Forti \cite{for2}.

\begin{lemma}\label{lem1}{\rm(\cite{brz})} Let $S$ be a nonempty set, $(Y,d)$ a complete metric space and $\epsilon\geq 0$. Suppose that $h:S\longrightarrow Y, f:Y\longrightarrow Y, g:S\longrightarrow S$ and $\phi:S\longrightarrow [0,\infty)$ are mappings satisfying
\begin{eqnarray*}
d\left((f\circ h\circ g)(x),h(x)\right)\leq\phi(x), \\
d\Big(f(x),f(y)\Big)\leq\epsilon d(x,y),\\
\Phi(x):=\sum_{n=0}^{\infty}\epsilon^n\Big(\phi\circ g^n\Big)(x)<\infty,
\end{eqnarray*}
for all $x,y\in S$. Then, for every $x\in S$, the limit
\begin{eqnarray*}
H(x):=\lim_{n\to\infty}\Big(f^n\circ h\circ g^n\Big)(x)
\end{eqnarray*}
exists and $H:S\longrightarrow Y$, defined in this way, is the unique mapping such that $f\circ H\circ g=H$ and
\begin{eqnarray*}
d(h(x),H(x))\leq \Phi(x),\qquad (x\in S).
\end{eqnarray*}
\end{lemma}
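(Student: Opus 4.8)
The plan is to treat this as a standard iteration (Banach-type fixed point) argument. First I would introduce, for each fixed $x\in S$, the sequence $h_n(x):=(f^n\circ h\circ g^n)(x)$, so that $h_0=h$ and, writing $f^{n+1}=f^n\circ f$ and $g^{n+1}=g\circ g^n$, we have $h_{n+1}(x)=f^n\big((f\circ h\circ g)(g^n(x))\big)$ while $h_n(x)=f^n\big(h(g^n(x))\big)$. A one-line induction on the Lipschitz hypothesis shows $d(f^n(a),f^n(b))\le\epsilon^n d(a,b)$ for all $a,b\in Y$; combining this with the first hypothesis applied at the point $g^n(x)$ gives the key one-step bound
\[
d\big(h_{n+1}(x),h_n(x)\big)\le\epsilon^n\,d\big((f\circ h\circ g)(g^n(x)),\,h(g^n(x))\big)\le\epsilon^n(\phi\circ g^n)(x).
\]

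Next, for $m>n$ the triangle inequality yields $d(h_m(x),h_n(x))\le\sum_{k=n}^{m-1}\epsilon^k(\phi\circ g^k)(x)\le\sum_{k=n}^{\infty}\epsilon^k(\phi\circ g^k)(x)$, and since $\Phi(x)<\infty$ this right-hand tail tends to $0$ as $n\to\infty$. Hence $(h_n(x))_n$ is Cauchy in the complete space $Y$, so $H(x):=\lim_n h_n(x)$ exists. Putting $n=0$ and letting $m\to\infty$ in the same estimate, and using continuity of $d$, I get $d(h(x),H(x))\le\sum_{k=0}^{\infty}\epsilon^k(\phi\circ g^k)(x)=\Phi(x)$.

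For the functional identity I would use that $f$ is continuous, being $\epsilon$-Lipschitz (if $\epsilon=0$ then $f$ is constant and every assertion is immediate, so assume $\epsilon>0$). Since $H(g(x))=\lim_n (f^n\circ h\circ g^{n+1})(x)$, continuity of $f$ gives
\[
f\big(H(g(x))\big)=\lim_n f\big((f^n\circ h\circ g^{n+1})(x)\big)=\lim_n (f^{n+1}\circ h\circ g^{n+1})(x)=\lim_n h_{n+1}(x)=H(x),
\]
that is, $f\circ H\circ g=H$.

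Finally, for uniqueness, suppose $H'\colon S\to Y$ also satisfies $f\circ H'\circ g=H'$ and $d(h(x),H'(x))\le\Phi(x)$ for all $x$. Iterating the fixed-point identity gives $H=f^n\circ H\circ g^n$ and $H'=f^n\circ H'\circ g^n$ for every $n$, so, inserting $h(g^n(x))$ in the middle,
\[
d\big(H(x),H'(x)\big)\le\epsilon^n\,d\big(H(g^n(x)),H'(g^n(x))\big)\le 2\epsilon^n\Phi(g^n(x))=2\sum_{j=n}^{\infty}\epsilon^j(\phi\circ g^j)(x),
\]
where the last equality is the elementary identity $\epsilon^n\Phi(g^n(x))=\sum_{j\ge n}\epsilon^j(\phi\circ g^j)(x)$. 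The right-hand side is a tail of the convergent series $\Phi(x)$, hence tends to $0$, forcing $H=H'$. I do not expect a genuine obstacle here: the whole proof is bookkeeping with the series $\Phi$, and the one point to get right is that both the Cauchy estimate and the uniqueness estimate are governed by the \emph{tails} $\sum_{j\ge n}\epsilon^j(\phi\circ g^j)(x)$, which vanish precisely because $\Phi(x)$ is finite.
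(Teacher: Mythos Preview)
Your argument is correct and is exactly the standard iteration/fixed-point proof one expects for this kind of statement: the one-step estimate $d(h_{n+1}(x),h_n(x))\le\epsilon^n(\phi\circ g^n)(x)$, the Cauchy/tail argument, continuity of $f$ for the intertwining identity, and the tail-of-$\Phi$ computation for uniqueness are all handled cleanly, including the trivial $\epsilon=0$ case.

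Note, however, that the paper does not supply its own proof of this lemma: it is quoted from \cite{brz} (and flagged as a special case of a result of Forti \cite{for2}) and is used as a black box in the proof of Theorem~\ref{th1}. So there is no in-paper argument to compare against; your write-up simply fills in the omitted proof, and does so along the lines one would find in the cited sources.
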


Let $\mathbb{F}$ be the real or complex field and $\mathcal{B}_{\mathbb{F}}$ stand for the family of all sets $\Lambda\subseteq\mathbb{F}$ such that each additive function $f:\mathbb{F}\longrightarrow X$, which is bounded on $\Lambda$, must be continuous. It is well-known that if $\Lambda\subseteq\mathbb{F}$ and int$\Lambda\neq\emptyset$, then $\Lambda\in\mathcal{B}_{\mathbb{F}}$.

\begin{lemma}{\rm(\cite{brf})}\label{lem2} Let $X$ be a normed algebra over the complex field $\mathbb{C}$ and $Y$ a Banach algebra over $\mathbb{C}$.
Let $\Lambda_0\in\mathcal{B}_{\mathbb{C}}$ be a bounded set and $f:X\longrightarrow Y$ an additive function such that $f(\lambda x)=\lambda f(x)$ for all $x\in X$ and all $\lambda\in\Lambda_0$. Then, $f$ is $\mathbb{C}$-linear.
\end{lemma}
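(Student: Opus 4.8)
The plan is to fix the vector $x$, reduce the problem to one complex variable, and then let the defining property of $\mathcal{B}_{\mathbb{C}}$ do the work. First I would fix $x\in X$ and introduce $h_x\colon\mathbb{C}\longrightarrow Y$ by $h_x(\lambda)=f(\lambda x)-\lambda f(x)$. Additivity of $f$ shows at once that $h_x$ is additive, while the hypothesis $f(\lambda x)=\lambda f(x)$ for $\lambda\in\Lambda_0$ says exactly that $h_x$ vanishes identically on $\Lambda_0$; in particular $h_x$ is bounded on $\Lambda_0$. Since $\Lambda_0\in\mathcal{B}_{\mathbb{C}}$ and $Y$ is a normed space, this forces $h_x$ to be continuous. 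An additive continuous map from $\mathbb{C}\cong\mathbb{R}^2$ into a normed space is automatically $\mathbb{R}$-linear --- additivity gives $\mathbb{Q}$-homogeneity, and continuity upgrades it to $\mathbb{R}$-homogeneity by density of $\mathbb{Q}$ in $\mathbb{R}$ --- so $h_x$ is $\mathbb{R}$-linear and vanishes on $\Lambda_0$. Once we know that $\Lambda_0$ spans $\mathbb{C}$ over $\mathbb{R}$, it follows that $h_x\equiv 0$, that is, $f(\lambda x)=\lambda f(x)$ for every $\lambda\in\mathbb{C}$; since $x$ was arbitrary and $f$ is already additive, this is precisely the $\mathbb{C}$-linearity of $f$. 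Note that the algebra structures of $X$ and $Y$ are never used.

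The one place that needs a separate argument --- and the step I expect to be the crux --- is the claim that a member of $\mathcal{B}_{\mathbb{C}}$ cannot be contained in a proper real-linear subspace of $\mathbb{C}$, i.e.\ must contain two $\mathbb{R}$-linearly independent elements. Suppose, to the contrary, that $\Lambda_0$ lies in a real line $L=\mathbb{R}w\subsetneq\mathbb{C}$ (the degenerate case $L=\{0\}$ being covered the same way). Pick an $\mathbb{R}$-linear isomorphism $T\colon\mathbb{C}\longrightarrow\mathbb{C}$ carrying $L$ onto the real axis, let $\psi\colon\mathbb{R}\longrightarrow\mathbb{R}$ be a discontinuous additive function (such a $\psi$ exists by a Hamel-basis construction), and put $\phi(a+bi)=\psi(b)$. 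Then $\phi\colon\mathbb{C}\longrightarrow\mathbb{R}$ is additive, discontinuous, and identically zero on the real axis, so $\phi\circ T$ is a discontinuous additive function vanishing on $L\supseteq\Lambda_0$, hence bounded on $\Lambda_0$ --- contradicting $\Lambda_0\in\mathcal{B}_{\mathbb{C}}$. (If one only cares about the sufficient condition $\operatorname{int}\Lambda_0\neq\emptyset$ quoted above, this spanning property is obvious; but the argument just sketched requires nothing beyond membership in $\mathcal{B}_{\mathbb{C}}$, and in particular does not use that $\Lambda_0$ is bounded.)

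Finally, a technical variant I would keep in reserve: if one prefers to apply the definition of $\mathcal{B}_{\mathbb{C}}$ only to scalar-valued functions, run the entire argument with $\rho\circ h_x$ in place of $h_x$ for each bounded linear functional $\rho$ on $Y$, and then use the Hahn--Banach theorem to separate points of $Y$; this recovers both the $\mathbb{R}$-linearity of $h_x$ and its vanishing on $\Lambda_0$, and it makes transparent that completeness of $Y$ is not actually needed. With the subspace observation in hand, the remaining steps are routine and the whole proof is short.
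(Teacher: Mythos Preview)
The paper does not supply its own proof of this lemma; it is quoted verbatim from \cite{brf} and used as a black box, so there is nothing in the present paper to compare your argument against.

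That said, your proof is correct and essentially the standard one. Reducing to the single-variable additive map $h_x(\lambda)=f(\lambda x)-\lambda f(x)$, invoking $\Lambda_0\in\mathcal{B}_{\mathbb{C}}$ to obtain continuity (hence $\mathbb{R}$-linearity), and then arguing that a member of $\mathcal{B}_{\mathbb{C}}$ cannot sit inside a real line is exactly the right structure. The Hamel-basis construction of a discontinuous additive $\phi$ vanishing on a prescribed line is the natural obstruction, and your Hahn--Banach variant cleanly resolves any ambiguity about whether the definition of $\mathcal{B}_{\mathbb{C}}$ quantifies over all target spaces or only scalar-valued maps. Your observations that neither the algebra structures nor the completeness of $Y$ are used are also accurate.
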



We mention that in the proof of the following theorem, we use some ideas of Theorem 1 of \cite{brf}.

\begin{theorem}\label{th1} Let $X$ be a Banach algebra with unit element $\bf {1}$. Let $h:X\longrightarrow X$ and $\phi,\psi:X^2\longrightarrow [0,\infty)$ satisfy
\begin{equation}\label{trd}
\Vert h(xyx)-h(x)yx-xh(y)x-xyh(x)\Vert\leq\psi(x,y),
\end{equation}
\begin{equation}\label{add}
\Vert h(\lambda ax+by)-\lambda Ah(x)-Bh(y)\Vert\leq\phi(x,y)
\end{equation}
for all $x,y\in X$, $\lambda\in\Lambda$ and for some fixed $a,b,A,B\in\mathbb{F}$, in which $ab\neq 0$, $\Lambda\subseteq\mathbb{F}, \Lambda\neq\emptyset$.
Assume that there is $\xi\in\Lambda\backslash\{0\}$ such that $d:=\xi A+B=\xi a+b\neq 0,1$ and
$$\Phi(x):=\sum_{k=0}^{\infty}\vert d\vert^{-k-1}\phi\Big(d^kx,d^kx\Big)<\infty,\label{4j}$$
\begin{align}
\liminf_{k\to\infty}|d|^{-k}\phi\Big(d^kx,d^ky\Big)=0,\label{nj}
\end{align}
\begin{align}
\liminf_{k\to\infty}|d|^{-3k}\psi\Big(d^k x,d^ky\Big)=0,\label{3j}
\end{align}
\begin{align}
\liminf_{k\to\infty}|d|^{-2k}\psi\Big(d^kx,y\Big)=0\label{2j}\qquad\forall x,y\in X.
\end{align}
Then, $h$ is a Jordan triple derivation. Furthermore, if $\Lambda$ has a bounded subset $\Lambda_0\in\mathcal{B}_{\mathbb{F}}$, $h$ is a linear Jordan triple derivation.
\end{theorem}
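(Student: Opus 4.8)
The plan is to attach to $h$, by means of Lemma~\ref{lem1}, a Jordan triple derivation $H$, and then to prove that $h=H$. Putting $\lambda=\xi$ and $y=x$ in \eqref{add} and using $\xi a+b=\xi A+B=d$ gives $\Vert h(dx)-dh(x)\Vert\le\phi(x,x)$ for all $x\in X$. I would then apply Lemma~\ref{lem1} with $S=Y=X$, $g(x)=dx$, $f(u)=d^{-1}u$ (so that $\epsilon=|d|^{-1}$) and control function $x\mapsto|d|^{-1}\phi(x,x)$; the convergence condition of the lemma is precisely $\Phi(x)<\infty$. The lemma produces $H(x)=\lim_{n\to\infty}d^{-n}h(d^{n}x)$ with $H(dx)=dH(x)$ and $\Vert h(x)-H(x)\Vert\le\Phi(x)$ for all $x$.

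Next I would show that $H$ is additive. Replacing $(x,y)$ by $(d^{k}x,d^{k}y)$ in \eqref{add}, using $\lambda a\,d^{k}x+b\,d^{k}y=d^{k}(\lambda ax+by)$, dividing by $|d|^{k}$, and letting $k\to\infty$ along a subsequence furnished by \eqref{nj} (along which every rescaled term $d^{-k}h(d^{k}w)$ still converges to $H(w)$) yields $H(\lambda ax+by)=\lambda AH(x)+BH(y)$ for all $\lambda\in\Lambda$ and $x,y\in X$. Taking $x=y=0$ and $\lambda=\xi$ forces $H(0)=0$, since $d\neq1$; then $x=0$ gives $H(by)=BH(y)$, $y=0$ gives $H(\lambda ax)=\lambda AH(x)$, and therefore $H(\lambda ax+by)=H(\lambda ax)+H(by)$. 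Because $a,b,\xi\neq0$, the substitutions $x=(\xi a)^{-1}u$, $y=b^{-1}v$ turn the last identity into additivity of $H$.

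The core step upgrades this to the Jordan triple identity and recovers $h$. Replacing $(x,y)$ by $(d^{k}x,d^{k}y)$ in \eqref{trd} (so that $(d^{k}x)(d^{k}y)(d^{k}x)=d^{3k}xyx$), dividing by $|d|^{3k}$, and passing to the limit along a subsequence given by \eqref{3j} yields $H(xyx)=H(x)yx+xH(y)x+xyH(x)$, i.e. $H$ is a Jordan triple derivation. Now replace only the outer variable $x$ by $d^{k}x$ in \eqref{trd}, keeping $y$ fixed, divide by $|d|^{2k}$, and use \eqref{2j}: here the middle summand $xh(y)x$ is not rescaled, so in the limit $H(xyx)=H(x)yx+xh(y)x+xyH(x)$. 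Subtracting the two identities gives $x\big(h(y)-H(y)\big)x=0$ for all $x,y\in X$, and putting $x=\mathbf{1}$ yields $h=H$; hence $h$ is a Jordan triple derivation. I expect this comparison --- the fact that \eqref{3j} and \eqref{2j} are tuned so that exactly one of the three limit terms retains $h$ while the others become $H$, which is then annihilated by the unit --- to be the decisive point; verifying the hypotheses of Lemma~\ref{lem1} and the interchanges of limits are routine.

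For the last assertion, with $h=H$ we have $h(\lambda ax)=\lambda Ah(x)$ for all $\lambda\in\Lambda$; taking $\lambda=\xi$ and $x=(\xi a)^{-1}z$ gives $h(z)=\xi Ah((\xi a)^{-1}z)$, and substituting this back shows $h(\xi^{-1}\lambda z)=\xi^{-1}\lambda\,h(z)$ for all $\lambda\in\Lambda$ and $z\in X$. Thus $h(\mu z)=\mu h(z)$ for every $\mu$ in the bounded set $\xi^{-1}\Lambda_{0}$, which belongs to $\mathcal{B}_{\mathbb{F}}$ (a nonzero scalar multiple of a set in $\mathcal{B}_{\mathbb{F}}$ again lies in $\mathcal{B}_{\mathbb{F}}$). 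In the complex case, Lemma~\ref{lem2} then gives $\mathbb{C}$-linearity of $h$; in the real case, for each fixed $z$ the additive map $\mu\mapsto h(\mu z)$ is bounded on $\xi^{-1}\Lambda_{0}\in\mathcal{B}_{\mathbb{R}}$, hence continuous, hence $\mathbb{R}$-homogeneous. Either way $h$ is a linear Jordan triple derivation.
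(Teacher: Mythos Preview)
Your proposal is correct and follows essentially the same route as the paper's proof: construct $H(x)=\lim_{k}d^{-k}h(d^{k}x)$ via Lemma~\ref{lem1}, pass to the limit in \eqref{add} and \eqref{trd} (with the two different scalings dictated by \eqref{3j} and \eqref{2j}) to obtain both the Jordan triple identity for $H$ and the mixed identity containing $xh(y)x$, subtract and evaluate at $x=\mathbf{1}$ to get $h=H$, and then deduce linearity from boundedness of $\lambda\mapsto h(\lambda x)$ on a suitable translate of $\Lambda_{0}$. The only minor caveat is that your linearity argument tacitly inverts $\xi A$; if $A=0$ the identity $h(\xi ax)=\xi Ah(x)=0$ already forces $h\equiv0$, so the conclusion is trivial in that case.
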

\begin{proof} Taking $x=y$ in \eqref{add}, we get
\begin{equation*}
\left\Vert h\Big((\lambda a+b)x\Big)-(\lambda A+B)h(x)\right\Vert\leq\phi(x,x)\label{x=y}
\end{equation*}
for all $x\in X, \lambda\in\Lambda$.
We have
\begin{align*}
 \left\Vert d^{-1}h(dx)-h(x)\right\Vert&\leq \vert d\vert^{-1}\Vert h(dx)-dh(x)\Vert\\
 & \leq \vert d\vert^{-1}\phi(x,x)
 \end{align*}
for all $x\in X$.  Let $f:X\longrightarrow X$ be given by $f(x)=\frac 1dx$, and $g:X\longrightarrow X$ be
defined by $g(x)=dx$ for all $x\in X$. Using Lemma \ref{lem1}, we deduce that the limit
$$ H(x):=\lim_{k\to\infty} \frac 1{d^k}h\Big(d^kx\Big)$$
exists and the mapping $H:X\longrightarrow X$ defined in this way is the unique solution of the functional equation
$ dH(x)=H(dx)$ such that $ \Vert h(x)-H(x)\Vert\leq \Phi(x)$ for all $x\in X$. We have
\begin{align*}
{}&\Vert H(xyx)-H(x)yx-xH(y)x-xyH(x)\Vert\\
&=\lim_{k\to\infty}\left\Vert \frac 1{d^{3k}}\Big[h\Big(d^{3k}xyx\Big)-d^{2k}h\Big(d^kx\Big)yx-d^{2k}xh\Big(d^ky\Big)x-d^{2k}xyh\Big(d^kx\Big)\Big]\right\Vert\\
&\leq\liminf_{k\to\infty}\frac 1{|d|^{3k}}\psi\Big(d^kx,d^ky\Big),
\end{align*}
for all $x,y\in X$. Hence, by \eqref{3j} we get
\begin{equation}\label{H}
H(xyx)=H(x)yx+xH(y)x+xyH(x)
\end{equation}
for all $x,y\in X$. We now show that $H$ is additive. By \eqref{add}, we obtain
\begin{align*}
{}&\left\Vert\frac 1{d^k}h\Big(d^k(\lambda ax+by)\Big)-\lambda A\frac 1{d^k}h\Big(d^kx\Big)-B\frac 1{d^k}h\Big(d^ky\Big)\right\Vert\leq\frac 1{|d|^k}\phi\Big(d^kx,d^ky\Big)
\end{align*}
for all $x,y\in X$. Since $\liminf_{k\to\infty}\frac 1{|d|^k}\phi\Big(d^kx,d^ky\Big)=0$, by \eqref{nj}
it follows that
\begin{equation}\label{eqadd}
H(\lambda ax+by)=\lambda AH(x)+BH(y) 
\end{equation}
for all $x,y\in X$ and $\lambda\in \Lambda$. Taking $\lambda=\xi$ in \eqref{eqadd}, we have  $H(0)=0$, $H(\xi ax)=\xi AH(x)$ and $H(by)=BH(y)$ for all $x,y\in X$.
So, we find
\begin{align*}
H(x+y)&=H\Big((\xi a)(\xi a)^{-1}x+bb^{-1}y\Big)\\
&=\xi AH\Big((\xi a)^{-1}x\Big)+BH\Big (b^{-1}y\Big)\\
&=H(x)+H(y),
\end{align*}
for all $x,y\in X$, and hence
\begin{equation}\label{additivity}
H(x+y)=H(x)+H(y)
\end{equation}
for all $x,y\in X$. Also, the inequality \eqref{trd} implies that
\begin{align*}{}& \left\Vert\frac 1{d^{2k}}\Big[h\Big(d^{2k}xyx\Big)-d^{k}h\Big(d^kx\Big)yx-d^{2k}xh(y)x-d^kxyh\Big(d^kx\Big)\Big]\right\Vert\leq\frac 1{|d|^{2k}}\psi\Big(d^kx,y\Big),
\end{align*}
for all $x,y\in X$. By \eqref{2j}, we arrive at
\begin{equation}\label{Hhh}
H(xyx)-H(x)yx-xh(y)x-xyH(x)=0
\end{equation}
for all $x,y\in X$. It follows from \eqref{H} and \eqref{additivity} that $H$ is a Jordan triple derivation.
Moreover, by \eqref{H} and \eqref{Hhh}, we have $ xH(y)x=xh(y)x$ for all $x,y\in X$. Taking $x=\bf{1}$, we obtain $H(y)=h(y)$ for all $y\in X$. Therefore, $h$ itself is a Jordan triple derivation. Finally, assume that $\Lambda$ has a bounded subset $\Lambda_0\in\mathcal{B}_{\mathbb{F}}$. The equality \eqref{eqadd} implies that $ H(\lambda ax)=\lambda AH(x)$ for all $x\in X$ and  $\lambda\in \Lambda$. Then, for each $x\in X$, the function $H_x:\mathbb{F}\longrightarrow X$, given by $H_x(\lambda)=H(\lambda x)$ is bounded on $\Lambda_0$. Since $H_x$ is additive, this means that $H_x$ is $\mathbb{R}$-linear. This gives the linearity of $H=h$ for $\mathbb{F}=\mathbb{R}$. In the case $\mathbb{F}=\mathbb{C}$, the linearity of $h$ can be obtained by using \eqref{eqadd} and Lemma \ref{lem2}.
\end{proof}

\begin{corollary} Suppose that $h: X\longrightarrow X$ is a mapping fulfilling 
\begin{align*}
{}&\Vert h(xyx)-h(x)yx-xh(y)x-xyh(x)\Vert\leq\theta(\Vert x\Vert^p+\Vert y\Vert^q+\Vert x\Vert^r\Vert y\Vert^s),\\
&\Vert h(x+y)-h(x)-h(y)\Vert\leq\theta(\Vert x\Vert^p+\Vert y\Vert^q+\Vert x\Vert^r\Vert y\Vert^s)
\end{align*}
for all $x,y\in X$ and for some  $0<\theta,p,q,r,s<1$ with $r+s<1$. Then, $h$ is a Jordan triple derivation.
\end{corollary}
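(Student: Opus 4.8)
The plan is to derive this corollary as a direct application of Theorem~\ref{th1} by making the right choices of the parameters and verifying that all of its hypotheses hold. First I would specialize the data: take $\phi(x,y)=\psi(x,y)=\theta(\norm{x}^p+\norm{y}^q+\norm{x}^r\norm{y}^s)$, and in the additive condition \eqref{add} take $a=b=A=B=1$ and $\Lambda=\{1\}$, so that \eqref{add} becomes exactly the hypothesis $\norm{h(x+y)-h(x)-h(y)}\le\phi(x,y)$ and \eqref{trd} is literally the first inequality assumed in the corollary. With $\xi=1$ we get $d=\xi A+B=\xi a+b=2$, which indeed satisfies $d\neq 0,1$, and $ab=1\neq0$ as required.

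The heart of the verification is then the four growth/summability conditions. Since $d=2>1$ and $0<p,q,r+s<1$, each term $|d|^{-k}\norm{d^k x}^t=2^{-k}2^{kt}\norm{x}^t=2^{-k(1-t)}\norm{x}^t$ tends to $0$ (and is summable in $k$) precisely because the exponent $1-t$ is positive; the mixed term behaves like $2^{-k(1-r-s)}\norm{x}^r\norm{y}^s$, again decaying since $r+s<1$. Concretely, $\Phi(x)=\sum_{k\ge0}2^{-k-1}\phi(2^kx,2^kx)=\sum_{k\ge0}2^{-k-1}\theta\big(2^{kp}\norm{x}^p+2^{kq}\norm{x}^q+2^{k(r+s)}\norm{x}^{r+s}\big)$ is a sum of three convergent geometric-type series, so the condition $\Phi(x)<\infty$ in \eqref{4j} holds. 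Condition \eqref{nj} follows since $|d|^{-k}\phi(d^kx,d^ky)=\theta\big(2^{-k(1-p)}\norm{x}^p+2^{-k(1-q)}\norm{y}^q+2^{-k(1-r-s)}\norm{x}^r\norm{y}^s\big)\to0$; likewise \eqref{3j} with the stronger weight $|d|^{-3k}$ is even easier, and \eqref{2j} reads $|d|^{-2k}\psi(d^kx,y)=\theta\big(2^{-k(2-p)}\norm{x}^p+2^{-2k}\norm{y}^q+2^{-k(2-r)}\norm{x}^r\norm{y}^s\big)\to0$ since $2-p,2-r>0$. Thus all hypotheses of Theorem~\ref{th1} are met, and the theorem yields that $h$ is a Jordan triple derivation.

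I do not expect a genuine obstacle here: the corollary is a ``plug in the parameters'' consequence, and the only mild care needed is to keep track of which power of $|d|$ multiplies which term and to check that every resulting exponent of $2$ is negative, which is exactly guaranteed by the standing assumptions $p,q<1$ and $r+s<1$ (note $r,s<1$ individually follow as well, and are used in \eqref{2j}). One could optionally remark that, since $\Lambda=\{1\}$ has no infinite bounded subset in $\mathcal{B}_{\mathbb F}$ forcing continuity, the linearity clause of Theorem~\ref{th1} does not apply and we only conclude the (additive) Jordan triple derivation property — consistent with the statement of the corollary, which does not assert linearity.
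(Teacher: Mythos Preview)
Your proposal is correct and follows exactly the paper's approach: specialize Theorem~\ref{th1} with $a=b=A=B=1$, $\Lambda=\{1\}$, $\xi=1$ (so $d=2$) and then verify the growth conditions, which the paper simply summarizes as ``it is easy to see that the conditions of Theorem~\ref{th1} are satisfied.'' Your explicit checks of \eqref{4j}--\eqref{2j} and your remark that the linearity clause is not invoked are accurate elaborations of that one-line proof.
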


\begin{proof} Putting $A=B=a=b=1,\xi=1$ and $\Lambda=\{1\}$ in Theorem \ref{th1}, we have $d=2$. It is easy to see that the conditions of Theorem
\ref{th1} are satisfied.
\end{proof}


Recall that an algebra $X$ is called {\it semiprime} whenever $aXa=\{0\}$ for some $a\in X$, we have $a=0$. The class of semiprime algebras contains $B(X)$ for Banach spaces $X$, the standard algebras, the subalgebras of $B(X)$ containing all finite rank operators on $X$ and all $C^*$-algebras. One remembers that a ring $R$ is said to be {\it $2$-torsion free} if and only if $2r=0$ implies $r=0$ for all $r\in R$.

\begin{corollary}\label{co1} Let $X$ be a semiprime Banach algebra with unit element $\bf {1}$. Let $h:X\longrightarrow X$ and $\phi,\psi:X^2\longrightarrow [0,\infty)$ satisfy
\begin{equation*}
\Vert h(xyx)-h(x)yx-xh(y)x-xyh(x)\Vert\leq\psi(x,y),
\end{equation*}
\begin{equation*}
\Vert h(\lambda ax+by)-\lambda Ah(x)-Bh(y)\Vert\leq\phi(x,y)
\end{equation*}
for all $x,y\in X$, $\lambda\in\Lambda$ and for some fixed $a,b,A,B\in\mathbb{F}, ab\neq 0$ and $\Lambda\subseteq\mathbb{F}$ with $\Lambda\neq\emptyset$.
Assume that there is $\xi\in\Lambda\backslash\{0\}$ such that $d:=\xi A+B=\xi a+b\neq 0,1$ and
\begin{align*}
{}&\Phi(x):=\sum_{k=0}^{\infty}\vert d\vert^{-k-1}\phi\Big(d^kx,d^kx\Big)<\infty,\\
&\liminf_{k\to\infty}|d|^{-k}\phi\Big(d^kx,d^ky\Big)=0,\\
&\liminf_{k\to\infty}|d|^{-3k}\psi\Big(d^k x,d^ky\Big)=0,\\
&\liminf_{k\to\infty}|d|^{-2k}\psi\Big(d^kx,y\Big)=0
\end{align*}
for all $x,y\in X$. Then, $h$ is a derivation. Furthermore, if $\Lambda$ has a bounded subset $\Lambda_0\in\mathcal{B}_{\mathbb{F}}$, then $h$ is a linear derivation.
\end{corollary}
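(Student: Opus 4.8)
The plan is to deduce this from Theorem \ref{th1} together with a purely ring-theoretic structure theorem, with semiprimeness used only at the very last step. First I would observe that the hypotheses of the corollary are literally the hypotheses \eqref{trd}, \eqref{add} and the four summability/$\liminf$ conditions of Theorem \ref{th1} (with the same $a,b,A,B,\Lambda,\xi,d$), so Theorem \ref{th1} applies verbatim. It yields that $h$ is additive and satisfies
\[
h(xyx)=h(x)yx+xh(y)x+xyh(x)\qquad(x,y\in X),
\]
and, when $\Lambda$ contains a bounded member of $\mathcal{B}_{\mathbb{F}}$, that $h$ is in fact $\mathbb{F}$-linear. Thus it remains only to promote this (linear) Jordan triple derivation to a (linear) derivation.

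For that step I would use that $X$, being a Banach algebra over $\mathbb{F}\in\{\mathbb{R},\mathbb{C}\}$, is a $2$-torsion-free ring, and that it is assumed semiprime. First I would record $h(\mathbf{1})=0$: putting $x=y=\mathbf{1}$ in the Jordan triple identity gives $h(\mathbf{1})=3h(\mathbf{1})$, i.e. $2h(\mathbf{1})=0$, whence $h(\mathbf{1})=0$ by $2$-torsion-freeness. Then putting $y=\mathbf{1}$ yields $h(x^{2})=h(x)x+xh(x)$ for all $x\in X$, so $h$ is a Jordan derivation on the $2$-torsion-free semiprime algebra $X$. By the classical theorem of Cusack and Bre\v{s}ar (every Jordan derivation on a $2$-torsion-free semiprime ring is a derivation; equivalently, the corresponding statement for Jordan triple derivations), $h$ is a derivation, and the $\mathbb{F}$-linearity obtained above is preserved, giving the final assertion. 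Alternatively one could skip the unital reduction and cite directly the result that a Jordan triple derivation on a $2$-torsion-free semiprime ring is a derivation.

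The corollary carries essentially no obstacle of its own: all of the analysis is already contained in Theorem \ref{th1}, and the passage from Jordan triple derivation to derivation is entirely absorbed into the cited semiprime-ring theorem. The only point needing a line of care is that the hypotheses of that theorem genuinely hold here, namely $2$-torsion-freeness (immediate, as $X$ is an algebra over a characteristic-zero field, so $2$ is invertible) and semiprimeness (assumed). If one wished to be self-contained one would instead reproduce the standard computation showing that, on a $2$-torsion-free semiprime ring, the element $u_{x,y}:=h(xy)-h(x)y-xh(y)$ satisfies $u_{x,y}\,w\,u_{x,y}=0$-type relations forcing $u_{x,y}=0$; but for the purposes of this corollary invoking the known theorem is the natural route.
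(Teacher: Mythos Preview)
Your proposal is correct and matches the paper's own proof: apply Theorem \ref{th1} to obtain that $h$ is a (linear) Jordan triple derivation, then invoke Bre\v{s}ar's theorem \cite[Theorem 4.3]{bre} that every Jordan triple derivation on a $2$-torsion free semiprime ring is a derivation. The intermediate reduction to a Jordan derivation via $h(\mathbf{1})=0$ is unnecessary since the paper (and your alternative) cites the Jordan triple version directly, but it is harmless.
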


\begin{proof}By Theorem \ref{th1}, $h$ is a Jordan triple derivation. Then, the conclusion follows directly by  \cite[Theorem 4.3]{bre}, which states that every  Jordan triple derivation on a $2$-torsion free semiprime ring is a derivation.
\end{proof}


One shoule remember that a function $\phi:A\longrightarrow B$ is called {\it contractively subadditive} if the domain and the codomain $(B,\leq)$ are closed under addition and there exists a constant $L$ with $0<L<1$ such that $ \phi(x+y)\leq L(\phi(x)+\phi(y))$ for all $x,y\in A$. Note that if $\phi$ is contractively subadditive, then $\frac 12\phi(2x)\leq L\phi(x)$ for all $x\in A$.  The next theorem is motivated by \cite{nap}, in which the authors proved the generalized Hyers-Ulam stability of homomorphisms on Banach algebras.

\begin{theorem}\label{th2} Let $X$ be a Banach algebra.  Let $\phi,\psi:X^2\longrightarrow [0,\infty)$  and $f:X\longrightarrow X$ with $f(0)=0$ satisfy
\begin{equation*}
\Vert f(xyx)-f(x)yx-xf(y)x-xyf(x)\Vert\leq\psi(x,y),
\end{equation*}
\begin{equation*}
\Vert f(2x+y)+f(x+2y)-f(3x)-f(3y)\Vert\leq\phi(x,y)
\end{equation*}
for all $x,y\in X$. Assume that there exists $0<L<1$ such that
\begin{equation*}
\frac 12\phi(2x,2y)\leq L\phi(x,y),
\end{equation*}
\begin{equation}\label{cond3}
\lim_{k\to\infty}\frac 1{8^k}\psi\Big(2^kx,2^ky\Big)=0
\end{equation}
for all $x,y\in X$. Then, there exists a unique Jordan triple derivation $H:X\longrightarrow X$ such that
\begin{equation*}
\Vert f(x)-H(x)\Vert\leq\frac1{2-2L}\Phi(x),
\end{equation*}
 for all $x\in X$, where $\Phi(x)=\phi\Big(\frac x2,0\Big)+\phi\Big(-\frac x2,0\Big)+\phi\Big(\frac x2,-\frac x2\Big)+\phi\Big(-\frac x3,\frac{2x}3\Big)$.
\end{theorem}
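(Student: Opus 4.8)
The plan is to run the classical direct method through Lemma~\ref{lem1}; the only non-routine ingredient is the choice of substitutions that converts the Jensen-type inequality into an approximate $2$-homogeneity estimate with exactly the prescribed control function $\Phi$.

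\emph{Step 1 (an approximate $2$-homogeneity estimate).} First I would substitute the four pairs $(\tfrac x2,0)$, $(-\tfrac x2,0)$, $(\tfrac x2,-\tfrac x2)$, $(-\tfrac x3,\tfrac{2x}3)$ for $(x,y)$ in the hypothesis $\Vert f(2x+y)+f(x+2y)-f(3x)-f(3y)\Vert\le\phi(x,y)$ and use $f(0)=0$, obtaining respectively
\begin{align*}
\Big\Vert f(x)+f\big(\tfrac x2\big)-f\big(\tfrac{3x}2\big)\Big\Vert&\le\phi\big(\tfrac x2,0\big),\\
\Big\Vert f(-x)+f\big(-\tfrac x2\big)-f\big(-\tfrac{3x}2\big)\Big\Vert&\le\phi\big(-\tfrac x2,0\big),\\
\Big\Vert f\big(\tfrac x2\big)+f\big(-\tfrac x2\big)-f\big(\tfrac{3x}2\big)-f\big(-\tfrac{3x}2\big)\Big\Vert&\le\phi\big(\tfrac x2,-\tfrac x2\big),\\
\Big\Vert f(x)-f(-x)-f(2x)\Big\Vert&\le\phi\big(-\tfrac x3,\tfrac{2x}3\big).
\end{align*}
The combination (first)$+$(second)$-$(third) cancels every $f(\pm\tfrac x2)$ and $f(\pm\tfrac{3x}2)$ term and gives $\Vert f(x)+f(-x)\Vert\le\phi(\tfrac x2,0)+\phi(-\tfrac x2,0)+\phi(\tfrac x2,-\tfrac x2)$; adding the fourth inequality then yields $\Vert f(2x)-2f(x)\Vert\le\Phi(x)$ for all $x\in X$.

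\emph{Step 2 (construction of $H$).} Iterating the contractivity hypothesis gives $\phi(2^kx,2^ky)\le(2L)^k\phi(x,y)$, and since each of the four terms of $\Phi$ evaluated at $2^kx$ has the form $\phi(2^ku,2^kv)$, we get $\Phi(2^kx)\le(2L)^k\Phi(x)$. Now I would apply Lemma~\ref{lem1} with both of the Lemma's sets equal to $X$, its $h$ equal to our $f$, its $g$ the doubling map $x\mapsto2x$, its $f$ the map $x\mapsto\tfrac12x$ (so $\epsilon=\tfrac12$), and its $\phi$ equal to $\tfrac12\Phi$: the first hypothesis of the Lemma is exactly the estimate of Step~1 divided by $2$, and $\sum_{k=0}^{\infty}\tfrac1{2^k}\cdot\tfrac12\Phi(2^kx)\le\tfrac12\sum_{k=0}^{\infty}L^k\Phi(x)=\tfrac1{2-2L}\Phi(x)<\infty$. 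Hence $H(x):=\lim_{k\to\infty}2^{-k}f(2^kx)$ exists, satisfies $H(2x)=2H(x)$, and $\Vert f(x)-H(x)\Vert\le\tfrac1{2-2L}\Phi(x)$.

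\emph{Step 3 ($H$ is a Jordan triple derivation, and uniqueness).} Replacing $(x,y)$ by $(2^kx,2^ky)$ in the Jensen-type hypothesis and dividing by $2^k$ bounds the right side by $L^k\phi(x,y)\to0$, so in the limit $H(2x+y)+H(x+2y)=H(3x)+H(3y)$; together with $H(0)=0$ and $H(2x)=2H(x)$, the choice $y=0$ gives $H(3x)=3H(x)$ and then $y=-x$ gives $H(-x)=-H(x)$. Putting $u=2x-y$, $v=2y-x$ in $H(2u+v)+H(u+2v)=H(3u)+H(3v)$ yields $H(2x-y)+H(2y-x)=\tfrac13\big(H(3x)+H(3y)\big)=H(x)+H(y)$, while replacing $y$ by $y-x$ in the same identity and symmetrizing in $x,y$ (using oddness) yields $2H(x+y)+H(2x-y)+H(2y-x)=3H(x)+3H(y)$; subtracting the former from the latter gives $H(x+y)=H(x)+H(y)$. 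For the triple identity, replace $(x,y)$ by $(2^kx,2^ky)$ in the $\psi$-inequality and divide by $8^k$: by \eqref{cond3} the right side tends to $0$, and using that $\{2^{-3k}f(2^{3k}w)\}_k$ is a subsequence of $\{2^{-n}f(2^nw)\}_n$ and that multiplication is jointly continuous, the left side tends to $\Vert H(xyx)-H(x)yx-xH(y)x-xyH(x)\Vert$, so $H$ is a Jordan triple derivation. For uniqueness, any other Jordan triple derivation $H'$ with the stated bound is additive, hence $H'(2^kx)=2^kH'(x)$, so $\Vert H(x)-H'(x)\Vert\le 2^{-k}\big(\Vert H(2^kx)-f(2^kx)\Vert+\Vert f(2^kx)-H'(2^kx)\Vert\big)\le\tfrac{2L^k}{2-2L}\Phi(x)\to0$.

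\emph{Expected obstacle.} The genuinely delicate points are Step~1 --- finding the four substitutions whose signed sum collapses to $\Vert f(2x)-2f(x)\Vert\le\Phi(x)$ with exactly the prescribed $\Phi$ --- and the purely algebraic passage in Step~3 from the Jensen-type identity for $H$ to full additivity; the rest is routine stability bookkeeping.
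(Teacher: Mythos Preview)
Your proof is correct and follows essentially the same approach as the paper. The only difference is presentational: the paper compresses your Steps~1--3 (the substitutions producing $\Vert f(2x)-2f(x)\Vert\le\Phi(x)$, the construction of $H$ with the bound, and the additivity of $H$) into the single sentence ``As in the proof of Theorem~2.2 of \cite{nap}, the limit $H(x)=\lim_{k\to\infty}2^{-k}f(2^kx)$ exists and $H$ is the unique additive solution with $\Vert f(x)-H(x)\Vert\le\frac1{2-2L}\Phi(x)$,'' and then verifies the Jordan triple identity exactly as you do by dividing by $8^k$ and invoking \eqref{cond3}. Your explicit derivation of the four substitutions and of additivity from the Jensen-type identity is precisely what is hidden inside that citation.
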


\begin{proof}
As in the proof of Theorem 2.2 of \cite{nap}, the limit
$$ H(x):=\lim_{k\to\infty} \frac 1{2^k}f\Big(2^kx\Big)$$
exists and the mapping $H:X\longrightarrow X$ defined in this way is the unique additive solution such that
$ 2H(x)=H(2x)$ for all $x\in X$ and 
$$ \Vert f(x)-H(x)\Vert\leq \frac 1{2-2L}\Phi(x)$$
 for all $x\in X$. Then, we have
\begin{align*}
{}&\Vert H(xyx)-H(x)yx-xH(y)x-xyH(x)\Vert\\
&=\lim_{k\to\infty}\left\Vert\frac1{8^k}\Big[f\Big(8^kxyx\Big)-4^kf\Big(2^kx\Big)yx-4^kxf\Big(2^ky\Big)x-
4^kxyf\Big(2^kx\Big)\Big]\right\Vert\\
&\leq\frac 1{8^k}\psi\left(2^kx,2^ky\right)
\end{align*}
By \eqref{cond3}, the right hand side of the above inequality tends to $0$ as  $k$ tend to infinity. So, $H$ is a Jordan triple derivation.
\end{proof}


We have the upcoming result which is analogous to Theorem \ref{th2} for the hyperstability of Jordan triple derivations on unital semiprime Banach algebras.

\begin{theorem}\label{thh} Let $X$ be a semiprime Banach algebra with unit element $\bf{1}$. Let $\phi,\psi:X^2\longrightarrow [0,\infty)$ and $f:X\longrightarrow X$ with $f(0)=0$ satisfy
\begin{equation*}\Vert f(xyx)-f(x)yx-xf(y)x-xyf(x)\Vert\leq\psi(x,y),
\end{equation*}
\begin{equation*}\Vert f(2x+y)+f(x+2y)-f(3x)-f(3y)\Vert\leq\phi(x,y)
\end{equation*}
for all $x,y\in X$. Assume that there exists $0<L<1$ such that
\begin{equation*}\frac 12\phi(2x,2y)\leq L\phi(x,y),
\end{equation*}
\begin{equation*}\lim_{k\to\infty}\frac 1{8^k}\psi\Big(2^kx,2^ky\Big)=0,
\end{equation*}
\begin{equation}\label{cond4}\lim_{k\to\infty}\frac 1{4^k}\psi\Big(2^kx,y\Big)=0
\end{equation}
for all $x,y\in X$. Then, $f$ itself is a derivation.
\end{theorem}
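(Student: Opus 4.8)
The plan is to bootstrap from Theorem~\ref{th2}. Under the present hypotheses that theorem already produces a unique additive Jordan triple derivation
\[
H(x):=\lim_{k\to\infty}\frac{1}{2^{k}}f\bigl(2^{k}x\bigr),\qquad 2H(x)=H(2x),
\]
with $\Vert f(x)-H(x)\Vert\le\frac{1}{2-2L}\Phi(x)$ for all $x\in X$. The only new ingredient here is hypothesis \eqref{cond4}, and its sole purpose will be to force $H=f$; once that is known, $f$ is an additive Jordan triple derivation, and since $X$ is a semiprime algebra over $\mathbb{F}$ --- hence a $2$-torsion free semiprime ring --- Bre\v{s}ar's theorem \cite[Theorem 4.3]{bre} yields at once that $f$ is a derivation, exactly as in the proof of Corollary~\ref{co1}.

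To carry this out, I would first record the Jordan triple derivation identity $H(xyx)=H(x)yx+xH(y)x+xyH(x)$ coming from Theorem~\ref{th2}. Then, replacing $x$ by $2^{k}x$ (and leaving $y$ fixed) in the first displayed inequality of the statement, I get
\[
\Bigl\Vert\,f\bigl(4^{k}xyx\bigr)-2^{k}f\bigl(2^{k}x\bigr)yx-4^{k}xf(y)x-2^{k}xy\,f\bigl(2^{k}x\bigr)\,\Bigr\Vert\le\psi\bigl(2^{k}x,y\bigr).
\]
Dividing by $4^{k}$ and letting $k\to\infty$: the term $4^{-k}f(4^{k}xyx)=2^{-2k}f(2^{2k}xyx)$ converges to $H(xyx)$ (it is a subsequence, $k\mapsto 2k$, of the sequence defining $H(xyx)$), the terms $2^{-k}f(2^{k}x)yx$ and $2^{-k}xy\,f(2^{k}x)$ converge to $H(x)yx$ and $xyH(x)$ respectively, the term $xf(y)x$ is independent of $k$, and the right-hand side tends to $0$ by \eqref{cond4}. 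Hence
\[
H(xyx)=H(x)yx+xf(y)x+xyH(x)\qquad(x,y\in X).
\]

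Comparing the two displayed identities for $H(xyx)$ gives $x\bigl(H(y)-f(y)\bigr)x=0$ for all $x,y\in X$; specializing $x=\mathbf{1}$ yields $H(y)=f(y)$ for every $y\in X$. Thus $f$ itself is an additive Jordan triple derivation, and an appeal to \cite[Theorem 4.3]{bre} completes the proof that $f$ is a derivation. The argument is short and essentially a transcription of the second half of the proof of Theorem~\ref{th1} into the setting of Theorem~\ref{th2}; the only point needing a moment's care is the identification $4^{-k}f(4^{k}xyx)\to H(xyx)$, which is legitimate precisely because the existence of that limit (along the full sequence, hence along any subsequence) is already guaranteed by Theorem~\ref{th2}. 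I do not anticipate a genuine obstacle --- the role of the extra summability condition \eqref{cond4}, absent from Theorem~\ref{th2}, is exactly to upgrade the Hyers--Ulam stability conclusion ($f$ close to a Jordan triple derivation) to hyperstability ($f$ is itself one).
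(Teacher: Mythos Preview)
Your proposal is correct and follows essentially the same route as the paper: invoke Theorem~\ref{th2} to obtain the Jordan triple derivation $H$, use \eqref{cond4} on $\psi(2^{k}x,y)$ to derive the mixed identity $H(xyx)=H(x)yx+xf(y)x+xyH(x)$, compare with the pure $H$-identity to get $x(H(y)-f(y))x=0$, set $x=\mathbf{1}$, and finish with Bre\v{s}ar's theorem. Your explicit remark that $4^{-k}f(4^{k}xyx)\to H(xyx)$ via the subsequence $k\mapsto 2k$ is a welcome clarification that the paper leaves implicit.
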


\begin{proof}By Theorem \ref{th2}, $H$ is a Jordan triple derivation, that is
 \begin{equation}H(xyx)=H(x)yx+xH(y)x+xyH(x)\qquad (x,y\in X).\label{2trd}
\end{equation}
Now, by \eqref{cond4} we get
\begin{align*}
{}&\Vert H(xyx)-H(x)yx-xf(y)x-xyH(x)\Vert\\
&=\lim_{k\to\infty}\left\Vert\frac1{4^k}\Big[f\Big(4^kxyx\Big)-2^kf\Big(2^kx\Big)yx-4^kxf(y)x-2^kxyf\Big(2^kx\Big)\Big]\right\Vert\\
&\leq\frac 1{4^k}\psi\Big(2^kx,y\Big)\to 0\text{~as~}k\to\infty.
\end{align*}
Therefore
\begin{equation}\label{equ2}
H(xyx)=H(x)yx+xf(y)x+xyH(x)
\end{equation}
for all $x,y\in X$. By \eqref{2trd} and \eqref{equ2}, we have $x(f(y)-H(y))x=0$ for all $x,y\in X$. If we take $x=\bf{1}$, it follows that $f=H$ itself is a Jordan triple derivation and thus it is a derivation (see the proof of Corollary \ref{co1}).
\end{proof}

For a positive integer $n_0 \in \mathbb{N}$, we put $\mathbb{T}_{n_0}^1 :=\{e^{i\theta} \ |\
0\leq\theta\leq\frac{2\pi}{n_0} \}.$ To  achieve some results of this section, we use from the following lemma which is proved in \cite[Lemma 1.1]{esb}. 

\begin{lemma}\label{le3} Let $n_0$ be a positive integer and let $X,Y$ be linear vector spaces on $\mathbb{C}$. Suppose that $f:X\longrightarrow Y$ is an additive mapping. Then, $f$ is $\mathbb{C}$-linear if and only if $f(\lambda x)=\lambda f(x)$ for all $x\in X$ and $\lambda\in\mathbb{T}_{n_0}^1$.
\end{lemma}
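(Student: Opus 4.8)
The \emph{only if} direction is immediate: a $\mathbb{C}$-linear map satisfies $f(\lambda x)=\lambda f(x)$ for every $\lambda\in\mathbb{C}$, in particular for every $\lambda\in\mathbb{T}_{n_0}^1$. So the content is the converse, and the plan is to upgrade the homogeneity hypothesis in three stages: from the arc $\mathbb{T}_{n_0}^1$ to the full unit circle $\mathbb{T}^1$, then from $\mathbb{T}^1$ to the closed unit disk, and finally from the disk to all of $\mathbb{C}$.

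First I would note that $\mathbb{T}_{n_0}^1$ generates $\mathbb{T}^1$ multiplicatively: for $0\le\theta\le 2\pi$ one has $e^{i\theta}=\bigl(e^{i\theta/n_0}\bigr)^{n_0}$ with $e^{i\theta/n_0}\in\mathbb{T}_{n_0}^1$. Because the hypothesis $f(\lambda x)=\lambda f(x)$ holds for \emph{all} $x\in X$, applying it with $x$ replaced by $\mu x$ gives $f(\lambda\mu x)=\lambda f(\mu x)=\lambda\mu f(x)$ for $\lambda,\mu\in\mathbb{T}_{n_0}^1$; iterating this $n_0$ times yields $f(\lambda x)=\lambda f(x)$ for every $\lambda\in\mathbb{T}^1$. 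Next, additivity of $f$ forces $\mathbb{Q}$-homogeneity, so $f(0)=0$ and $f(\tfrac12 z)=\tfrac12 f(z)$ for all $z\in X$. For $\lambda\in\mathbb{C}$ with $|\lambda|\le 1$ I would write $2\lambda=\lambda_1+\lambda_2$ with $\lambda_1,\lambda_2\in\mathbb{T}^1$ — geometrically, every point of the closed unit disk is the midpoint of two points of the unit circle, e.g. writing $\lambda=\rho e^{i\phi}$ one may take $\lambda_j=e^{i(\phi\pm\arccos\rho)}$. Then, using additivity together with the circle homogeneity just established, $f(\lambda x)=f\bigl(\tfrac12(\lambda_1+\lambda_2)x\bigr)=\tfrac12\bigl(f(\lambda_1 x)+f(\lambda_2 x)\bigr)=\tfrac12(\lambda_1+\lambda_2)f(x)=\lambda f(x)$. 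Finally, for an arbitrary $\mu\in\mathbb{C}$, choose $n\in\mathbb{N}$ with $n\ge|\mu|$, so $|\mu/n|\le 1$, and conclude $f(\mu x)=n f\bigl(\tfrac{\mu}{n}x\bigr)=n\cdot\tfrac{\mu}{n}f(x)=\mu f(x)$. Combined with additivity this is precisely $\mathbb{C}$-linearity.

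The only genuinely non-formal point is the disk-decomposition step: additivity alone delivers scalars only in $\mathbb{Q}$, so to reach all real (and hence all complex) scalars one must exploit the circle homogeneity through the averaging identity $2\lambda=\lambda_1+\lambda_2$. Everything else — the generation of $\mathbb{T}^1$ by the arc, the $\mathbb{Q}$-homogeneity, and the final scaling by a natural number — is routine bookkeeping.
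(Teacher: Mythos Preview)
Your argument is correct and is essentially the standard proof of this fact: extend homogeneity from the arc $\mathbb{T}_{n_0}^1$ to the full circle by multiplicative generation, then to the closed unit disk via the midpoint identity $2\lambda=\lambda_1+\lambda_2$ with $\lambda_1,\lambda_2\in\mathbb{T}^1$, and finally to all of $\mathbb{C}$ by $\mathbb{Q}$-homogeneity. The paper itself does not supply a proof of this lemma; it merely quotes the result from \cite[Lemma 1.1]{esb}, so there is nothing to compare against beyond noting that your proof is complete and self-contained.
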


The proof of the next lemma is similar to that of \cite[Lemma 2.4]{nap} but we include it for the sake of completeness.

\begin{lemma}\label{le1} Let $X,Y$ be complex linear spaces. A mapping $f:X\longrightarrow Y$ satisfies
\begin{equation}\label{ll} f(2\mu x+\mu y)+f(\mu x+2\mu y)=\mu[f(3x)+f(3y)]
\end{equation}
for all $x,y\in X$ and all $\mu\in\mathbb{T}_{n_0}^1$, if and only if $f$ is $\mathbb{C}$-linear.
\end{lemma}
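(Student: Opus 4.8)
\textbf{Proof proposal for Lemma \ref{le1}.}
The plan is to prove both implications. The easy direction is that every $\mathbb{C}$-linear map satisfies \eqref{ll}: if $f$ is additive and homogeneous, then $f(2\mu x+\mu y)+f(\mu x+2\mu y)=\mu f(2x+y)+\mu f(x+2y)=\mu[f(3x)+f(3y)]$ by expanding $f(2x+y)=2f(x)+f(y)$ and so on. For the nontrivial direction, I would first extract additivity of $f$ and then its $\mathbb{C}$-linearity via Lemma \ref{le3}. Setting $\mu=1$ in \eqref{ll} gives $f(2x+y)+f(x+2y)=f(3x)+f(3y)$ for all $x,y\in X$; plugging $y=0$ yields $f(2x)+f(x)=f(3x)+f(0)$, and plugging $x=0$ yields $f(y)+f(2y)=f(0)+f(3y)$, so $f(0)=0$ and $f(3x)=f(2x)+f(x)$. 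Substituting this back, \eqref{ll} with $\mu=1$ becomes $f(2x+y)+f(x+2y)=f(2x)+f(x)+f(2y)+f(y)$. A standard symmetrization argument (replace $(x,y)$ by various combinations, e.g. interchange roles, or set $y\mapsto -x+t$) should then force $f(x+y)=f(x)+f(y)$; this is exactly the routine computation carried out in \cite[Lemma 2.4]{nap}, which I would invoke or reproduce.

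Once $f$ is known to be additive, I would return to \eqref{ll} for general $\mu\in\mathbb{T}_{n_0}^1$. Using additivity, the left side equals $f(2\mu x)+f(\mu y)+f(\mu x)+f(2\mu y)=3f(\mu x)+3f(\mu y)$ after again using $f(2t)+f(t)=f(3t)$ (valid since $f$ is additive, so $f(3t)=3f(t)$, $f(2t)=2f(t)$), while the right side is $\mu[f(3x)+f(3y)]=3\mu[f(x)+f(y)]$. Hence $f(\mu x)+f(\mu y)=\mu f(x)+\mu f(y)$ for all $x,y\in X$; setting $y=0$ gives $f(\mu x)=\mu f(x)$ for all $x\in X$ and all $\mu\in\mathbb{T}_{n_0}^1$. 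By Lemma \ref{le3}, an additive map with this property is $\mathbb{C}$-linear, which completes the proof.

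The main obstacle is the passage from the "level-3" Cauchy-type identity $f(2x+y)+f(x+2y)=f(2x)+f(x)+f(2y)+f(y)$ to genuine additivity $f(x+y)=f(x)+f(y)$. This is the only step that is not a one-line substitution: one must cleverly choose auxiliary substitutions (for instance, exploiting $f(2t)=2f(t)$ which itself has to be bootstrapped from the identity, and then reducing the quadratic-looking relation) to collapse it. Since this is precisely the content of \cite[Lemma 2.4]{nap}, I would present the argument compactly, noting where it parallels the cited source, and spend the bulk of the written proof on the clean homogeneity deduction and the appeal to Lemma \ref{le3}.
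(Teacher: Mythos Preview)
Your overall approach is the same as the paper's: show $f(0)=0$, obtain additivity from the $\mu=1$ case by appealing to the cited result in \cite{nap}, then put $y=0$ and use additivity to get $f(\mu x)=\mu f(x)$, and conclude via Lemma \ref{le3}.

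There is one small slip. From the $\mu=1$ identity alone, your two substitutions $y=0$ and $x=0$ yield the \emph{same} equation $f(2t)+f(t)=f(3t)+f(0)$ with a renamed variable, so you cannot deduce $f(0)=0$ from them. To obtain $f(0)=0$ you must use some $\mu\neq 1$: setting $x=y=0$ in \eqref{ll} gives $2f(0)=2\mu f(0)$ for all $\mu\in\mathbb{T}_{n_0}^1$, and since this set contains a $\mu\neq 1$, it follows that $f(0)=0$. With this minor fix, the remainder of your argument goes through and coincides with the paper's proof (the paper cites \cite[Lemma 2.1]{nap} for the additivity step).
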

\begin{proof}Assume that $f$ satisfies (\ref{ll}) for all $x,y\in X$ and all $\mu\in\mathbb{T}_{n_0}^1$. It is easy to check that $f(0)=0$ and thus $f$ is additive by \cite[Lemma 2.1]{nap}. Letting $y=0$ in (\ref{ll}) and using the additivity of $f$, we obtain $f(\mu x)=\mu f(x)$ for all $x\in X$ and all $\mu\in\mathbb{T}_{n_0}^1$. Then, $f$ is linear by Lemma \ref{le3}. The converse is clear.
\end{proof}
Applying Lemma \ref{le1}, we improve the result of Theorem \ref{thh} as follows.

\begin{theorem}\label{th4} Let $X$ be a semiprime complex Banach algebra with identity. Let $\phi,\psi:X^2 \longrightarrow [0,\infty)$ and $f:X\longrightarrow X$ with $f(0)=0$ satisfy
\begin{equation*}\Vert f(xyx)-f(x)yx-xf(y)x-xyf(x)\Vert\leq\psi(x,y),
\end{equation*}
\begin{equation}\Vert f(2\mu x+\mu y)+f(\mu x+2\mu y)-\mu[f(3x)+f(3y)]\Vert\leq\phi(x,y)\label{con2102}
\end{equation}
for all $x,y\in X$ and all $\mu\in\mathbb{T}_{n_0}^1$.
Assume that there exists $0<L<1$ such that
\begin{equation}\label{3ee} \frac 12\phi(2x,2y)\leq L\phi(x,y),
\end{equation}
\begin{equation*}\lim_{k\to\infty}\frac{\psi(2^kx,2^ky)}{8^k}=0,
\end{equation*}
\begin{equation*}\lim_{k\to\infty}\frac 1{4^k}\psi(2^kx,y)=0
\end{equation*}
for all $x,y\in X$ and all $\mu\in\mathbb{T}_{n_0}^1$. Then, $f$ itself is a linear derivation.
\end{theorem}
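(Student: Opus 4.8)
The plan is to bootstrap from Theorem~\ref{thh} and then promote $f$ to a $\mathbb{C}$-linear map by means of Lemma~\ref{le1}. First I would put $\mu=1$ in \eqref{con2102} (note $1=e^{i0}\in\mathbb{T}_{n_0}^1$, so this substitution is legitimate); this gives exactly the estimate $\Vert f(2x+y)+f(x+2y)-f(3x)-f(3y)\Vert\le\phi(x,y)$ occurring in Theorem~\ref{thh}. Together with $f(0)=0$, the bound on $\psi$, the two limit conditions on $\psi$, and the contraction \eqref{3ee} on $\phi$, all hypotheses of Theorem~\ref{thh} are met, so $f$ is a derivation; in particular $f$ is additive, hence $f(2^{k}z)=2^{k}f(z)$ for every $z\in X$ and every $k\in\mathbb{N}$.

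Next I would exploit the full strength of \eqref{con2102}. Replacing $x,y$ by $2^{k}x,2^{k}y$ there and using additivity of $f$, each of the four $f$-terms carries a factor $2^{k}$, so after dividing by $2^{k}$ we obtain
\[
\bigl\Vert f(2\mu x+\mu y)+f(\mu x+2\mu y)-\mu[f(3x)+f(3y)]\bigr\Vert\le\frac{1}{2^{k}}\phi\bigl(2^{k}x,2^{k}y\bigr)
\]
for all $x,y\in X$, $\mu\in\mathbb{T}_{n_0}^1$ and $k\in\mathbb{N}$. Iterating \eqref{3ee} yields $\phi(2^{k}x,2^{k}y)\le(2L)^{k}\phi(x,y)$, whence the right-hand side is bounded by $L^{k}\phi(x,y)$, which tends to $0$ as $k\to\infty$ since $0<L<1$. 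As the left-hand side is independent of $k$, it must vanish, i.e.
\[
f(2\mu x+\mu y)+f(\mu x+2\mu y)=\mu\bigl[f(3x)+f(3y)\bigr]
\]
for all $x,y\in X$ and all $\mu\in\mathbb{T}_{n_0}^1$. This is precisely equation~\eqref{ll}, so Lemma~\ref{le1} shows that $f$ is $\mathbb{C}$-linear. Combined with the fact that $f$ is a derivation, this gives that $f$ is a linear derivation, as claimed.

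No genuine obstacle arises: the statement is essentially a synthesis of Theorem~\ref{thh} (which supplies that $f$ is an additive derivation) with Lemmas~\ref{le3}--\ref{le1} (which supply homogeneity along the arc $\mathbb{T}_{n_0}^1$). The only points meriting attention are the geometric decay estimate $\phi(2^{k}x,2^{k}y)\le(2L)^{k}\phi(x,y)$ that makes the error term collapse, and the observation that additivity of $f$ is exactly what legitimizes pulling the factor $2^{k}$ out of each $f$-term before passing to the limit; one should also check that the arc $\mathbb{T}_{n_0}^1=\{e^{i\theta}:0\le\theta\le 2\pi/n_0\}$ appearing in \eqref{con2102} is verbatim the index set for which Lemmas~\ref{le3} and~\ref{le1} are stated.
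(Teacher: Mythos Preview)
Your proposal is correct and follows essentially the same route as the paper: first invoke Theorem~\ref{thh} (with $\mu=1$) to obtain that $f$ is a derivation, then use additivity together with \eqref{con2102} and the decay of $2^{-k}\phi(2^{k}x,2^{k}y)$ forced by \eqref{3ee} to deduce the exact identity \eqref{ll}, and finally apply Lemma~\ref{le1}. Your write-up is in fact more explicit than the paper's on two points---the legitimacy of the substitution $\mu=1\in\mathbb{T}_{n_0}^1$ and the iteration $\phi(2^{k}x,2^{k}y)\le(2L)^{k}\phi(x,y)$---both of which are left implicit there.
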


\begin{proof} By Theorem \ref{thh}, $f$ is a derivation. The relation \eqref{3ee} necessities that 
$$\lim_{k\to\infty}\frac 1{2^k}\phi\Big(2^kx,2^ky\Big)=0$$
for all $x,y\in X$. Using the additivity of $f$ and \eqref{con2102}, we find
\begin{equation*}
 \Vert f(2\mu x+\mu y)+f(\mu x+2\mu y)-\mu[f(3x)+f(3y)]\Vert\leq\frac 1{2^k}\phi(2^kx,2^ky)
\end{equation*}
for all $x,y\in X$ and $\mu\in\mathbb{T}_{n_0}^1$. Hence, $f$ is $\mathbb{C}$-linear by Lemma \ref{le1}.
\end{proof}


\begin{corollary}  Let $X$ be a semiprime complex Banach algebra with identity and $p,q,\theta$
be  real numbers with $p<2,q<1$. Suppose that $f:X\longrightarrow X$ is a mapping such that
$$\Vert f(xyx)-f(x)yx-xf(y)x-xyf(x)\Vert\leq\theta(\Vert x\Vert^p+\Vert y\Vert^p),$$
\begin{equation*}\Vert f(2\mu x+\mu y)+f(\mu x+2\mu y)-\mu[f(3x)+f(3y)]\Vert\leq\theta(\Vert x\Vert^q+\Vert y\Vert^q)
\end{equation*}
for all $x,y\in X$ and all $\mu\in\mathbb{T}_{n_0}^1$.
Then, $f$ is a linear derivation.
\end{corollary}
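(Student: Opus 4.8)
The plan is to obtain this corollary as a direct specialization of Theorem \ref{th4}. Put
$$\psi(x,y):=\theta\bigl(\norm{x}^p+\norm{y}^p\bigr),\qquad \phi(x,y):=\theta\bigl(\norm{x}^q+\norm{y}^q\bigr).$$
With this choice the two defining inequalities in the hypothesis of Theorem \ref{th4} (the triple-derivation estimate and the estimate \eqref{con2102}) are precisely the ones assumed here, so it remains only to verify the three quantitative conditions of Theorem \ref{th4}.

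First I would check \eqref{3ee}. A one-line computation gives $\tfrac12\phi(2x,2y)=\tfrac12\theta\bigl(2^q\norm{x}^q+2^q\norm{y}^q\bigr)=2^{q-1}\phi(x,y)$, so the contractivity condition holds with $L:=2^{q-1}$, and $0<L<1$ precisely because $q<1$ (allowing $q$ to be any real number less than $1$ only makes $L$ smaller, which is harmless). Next, for the first limit condition, $\dfrac{\psi(2^kx,2^ky)}{8^k}=\dfrac{2^{kp}}{2^{3k}}\,\theta\bigl(\norm{x}^p+\norm{y}^p\bigr)=2^{k(p-3)}\theta\bigl(\norm{x}^p+\norm{y}^p\bigr)$, which tends to $0$ as $k\to\infty$ since $p<2<3$. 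Finally, for the second limit condition, $\dfrac1{4^k}\psi(2^kx,y)=2^{k(p-2)}\theta\norm{x}^p+2^{-2k}\theta\norm{y}^p\to 0$, again because $p<2$. Thus all hypotheses of Theorem \ref{th4} are fulfilled and $f$ is a linear derivation.

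I do not expect a genuine obstacle here: the whole argument is a routine verification that the polynomial control functions satisfy the abstract growth hypotheses, after which Theorem \ref{th4} does the work. The only point worth a second glance is the behaviour at the origin (e.g.\ confirming $f(0)=0$ and that the bounds are not vacuous there), which for positive exponents is immediate and in general follows from the functional inequality \eqref{con2102}; once that is noted, nothing else is needed.
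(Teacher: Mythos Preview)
Your proposal is correct and follows exactly the paper's approach: set $\psi,\phi$ as you do, take $L=2^{q-1}$, verify the three growth conditions of Theorem~\ref{th4}, and note $f(0)=0$. The paper's own proof is the one-line version of what you wrote; your added computations are all accurate.
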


\begin{proof} It is clear that $f(0)=0$, so it is enough to note that conditions of Theorem \ref{th4} hold with $L=2^{q-1}$.
\end{proof}
In parallel with the above corollary we have the next result.

\begin{corollary} Let $X$ be a semiprime complex Banach algebra with unit identity and $p,q,\theta$ be real numbers with $p<\frac 32, q<\frac 12$. Suppose that $f:X\longrightarrow X$ is a mapping such that
$$\Vert f(xyx)-f(x)yx-xf(y)x-xyf(x)\Vert\leq\theta\Vert x\Vert^p\Vert y\Vert^p,$$
\begin{equation*}\Vert f(2\mu x+\mu y)+f(\mu x+2\mu y)-\mu[f(3x)+f(3y)]\Vert\leq\theta\Vert x\Vert^q\Vert y\Vert^q
\end{equation*}
for all $x,y\in X$ and all $\mu\in\mathbb{T}_{n_0}^1$.
Then, $f$ is a linear derivation.
\end{corollary}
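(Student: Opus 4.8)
The plan is to recognize this as the product-bound counterpart of the preceding corollary and to check that the hypotheses of Theorem \ref{th4} are satisfied for the choices
\[
\psi(x,y)=\theta\Vert x\Vert^p\Vert y\Vert^p,\qquad \phi(x,y)=\theta\Vert x\Vert^q\Vert y\Vert^q;
\]
once this is done, the conclusion is immediate. First I would observe that $f(0)=0$: putting $x=y=0$ in the second inequality makes the right-hand side vanish, so $2f(0)=2\mu f(0)$ for every $\mu\in\mathbb{T}_{n_0}^1$, and choosing $\mu\neq 1$ forces $f(0)=0$.

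Next I would verify the contractivity condition \eqref{3ee}. A direct computation gives
\[
\frac12\phi(2x,2y)=\frac12\,\theta\,2^{q}\Vert x\Vert^{q}\,2^{q}\Vert y\Vert^{q}=2^{2q-1}\,\phi(x,y),
\]
so one takes $L=2^{2q-1}$, which lies in $(0,1)$ exactly because $q<\frac12$. Then I would check the two decay conditions on $\psi$. Since
\[
\frac1{8^k}\psi\bigl(2^kx,2^ky\bigr)=\theta\,2^{k(2p-3)}\Vert x\Vert^p\Vert y\Vert^p \quad\text{and}\quad \frac1{4^k}\psi\bigl(2^kx,y\bigr)=\theta\,2^{k(p-2)}\Vert x\Vert^p\Vert y\Vert^p,
\]
the first tends to $0$ because $p<\frac32$ and the second because $p<\frac32<2$. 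With all hypotheses of Theorem \ref{th4} in place, $f$ is a linear derivation.

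I do not anticipate any real obstacle here; the argument is a routine specialization of Theorem \ref{th4}, and the thresholds $p<\frac32$ and $q<\frac12$ in the statement are precisely those dictated by the exponents $2p-3$, $p-2$ and $2q-1$ that arise in the computations above. The only point deserving a word of care is the meaning of $\Vert 0\Vert^{q}$ when $q$ is negative; this is handled either by tacitly assuming $q>0$ as in the companion corollary, or by noting that in the relevant limits one evaluates the inequalities only at nonzero arguments.
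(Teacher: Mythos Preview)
Your proposal is correct and follows exactly the paper's approach: the paper's proof simply notes that $f(0)=0$ and applies Theorem \ref{th4} with $L=2^{2q-1}$, and you have filled in precisely the verifications that make this application legitimate. Your closing caveat about $\Vert 0\Vert^{q}$ for $q\le 0$ is a fair observation that the paper itself glosses over.
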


\begin{proof} Obviously, $f(0)=0$. The result follows from  Theorem \ref{th4} when $L=2^{2q-1}$.
\end{proof}

\begin{remark}  If the condition
$$\Vert f(2x+y)+f(x+2y)-f(3x)-f(3y)\Vert\leq\phi(x,y)$$
in Theorem \ref{th2} is replaced by
$$\Vert f(2\mu x+\mu y)+f(\mu x+2\mu y)-\mu[f(3x)+f(3y)]\Vert\leq\phi(x,y)$$
for all $x,y\in X$ and all $\mu\in\mathbb{T}_{n_0}^1$,
then the Jordan triple derivation $H$ is a linear Jordan triple derivation.
\end{remark}

\section{Hyperstability of Jordan triple derivations II}

In \cite{evd}, Eskandani et al. determined the general solution of the following mixed additive and quadratic functional equation
\begin{equation}\label{zam}
f(x+2y)+f(x-2y)+8f(y)=2f(x)+4f(2x).
\end{equation}
In \cite{bok2}, the authors established the Hyers-Ulam stability of the functional equation
\begin{equation}
f(x+my)+f(x-my)=2f(x)-2m^2f(y)+m^2f(2y)\label{3ae}
\end{equation}
in non-Archimedean normed spaces when $m$ is a nonzero even integer. Note that the equations \eqref{3ae} is a general form of \eqref{zam} (see also \cite{bok1}).

In this section we consider the hyperstability of Jordan triple derivations with the functional equation
\begin{equation*}
f(x+my)+f(x-my)=2f(x)-2m^2f(y)+m^2f(2y).\end{equation*}

Recall that a non-trivial ring (algebra) $R$ is called {\it prime} if for any two elements $a$ and $b$ of $R$, $arb=0$ for all $r\in R$ implies that either $a=0$ or $b=0$. Clearly, prime algebras are semiprime ones.


\begin{theorem}\label{th5} Let $X$ be a unital prime Banach algebra with a nontrivial idempotent element. Let $h:X\longrightarrow X$ be an odd mapping and $\phi,\psi:X^2\longrightarrow [0,\infty)$ satisfy
\begin{equation}\label{4trd}
\Vert h(xyx)-h(x)yx-xh(y)x-xyh(x)\Vert\leq\psi(x,y),
\end{equation}
\begin{equation}\label{4m}
\left\Vert h(x+my)+h(x-my)-2h(x)+2m^2h(y)-m^2h(2y)\right\Vert\leq\phi(x,y)
\end{equation}
for all $x,y\in X$ and some even integer constant $m\neq 0$.
Assume that
\begin{align}
{}&\Phi(x):=\sum_{k=0}^{\infty}2^{-k}\phi\Big(0,2^kx\Big)<\infty,\label{41}\\
&\liminf_{k\to\infty}2^{-3k}\psi\Big(2^k x,2^ky\Big)=0,\label{42}\\
&\liminf_{k\to\infty}2^{-2k}\psi\Big(2^kx,y\Big)=0\label{43}
\end{align}
for all $x,y\in X$. Then, $h$ is a  derivation.
\end{theorem}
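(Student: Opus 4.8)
The plan is to imitate the architecture of Theorems \ref{th1} and \ref{th2}: use Lemma \ref{lem1} to extract from $h$ a map $H$ homogeneous under doubling, transport the two approximate Jordan triple identities from $h$ to $H$, identify $H$ with $h$ by evaluating at the unit, and finally promote the resulting multiplicative Jordan triple identity to a genuine derivation via the nontrivial idempotent together with primeness. First I would set $x=0$ in \eqref{4m}; since $h$ is odd we have $h(0)=0$ and $h(my)+h(-my)=0$, so \eqref{4m} collapses to
\[
\Bigl\Vert\tfrac12 h(2y)-h(y)\Bigr\Vert\le\frac1{2m^{2}}\,\phi(0,y)\qquad(y\in X).
\]
Now apply Lemma \ref{lem1} with $S=Y=X$, $f(x)=\tfrac12 x$, $g(x)=2x$, $\epsilon=\tfrac12$ and control function $x\mapsto\frac1{2m^{2}}\phi(0,x)$; by \eqref{41} the associated series equals $\frac1{2m^{2}}\Phi(x)<\infty$, so the limit $H(x):=\lim_{k\to\infty}2^{-k}h(2^{k}x)$ exists, $H$ is the unique map with $H(2x)=2H(x)$ and $\Vert h(x)-H(x)\Vert\le\frac1{2m^{2}}\Phi(x)$, and $H$ is odd as a pointwise limit of odd maps.

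Next I would transfer the Jordan triple identities. Replacing $x,y$ by $2^{k}x,2^{k}y$ in \eqref{4trd}, dividing by $2^{3k}$ and using \eqref{42}, one gets $H(xyx)=H(x)yx+xH(y)x+xyH(x)$ for all $x,y\in X$; replacing only $x$ by $2^{k}x$ (with $y$ fixed) in \eqref{4trd}, dividing by $2^{2k}$ and using \eqref{43}, one gets $H(xyx)=H(x)yx+xh(y)x+xyH(x)$. Subtracting the two yields $x\bigl(H(y)-h(y)\bigr)x=0$ for all $x,y\in X$, and taking $x=\mathbf 1$ forces $H=h$. Hence $h$ is odd, $h(2x)=2h(x)$, and
\[
h(xyx)=h(x)yx+xh(y)x+xyh(x)\qquad(x,y\in X).
\]
Putting $x=y=\mathbf 1$ here and using that $X$, being a real or complex algebra, is $2$-torsion free gives $h(\mathbf 1)=0$, and then $y=\mathbf 1$ gives the multiplicative Jordan identity $h(x^{2})=h(x)x+xh(x)$.

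The remaining and, I expect, only genuinely delicate point is to deduce that this pointwise multiplicative $h$ is \emph{additive}. Here the nontrivial idempotent $e$ enters: writing $e_{1}=e$, $e_{2}=\mathbf 1-e$ and using the Peirce decomposition $X=\bigoplus_{i,j}e_{i}Xe_{j}$, one first pins down the Peirce components of the $h(e_{i})$ from the displayed identity, then establishes additivity of $h$ on each summand $e_{i}Xe_{j}$, and finally reconciles these across the summands, invoking primeness at each cancellation in the style of the standard Martindale-type additivity arguments for multiplicatively defined (Jordan) derivations. Once $h$ is additive it is a Jordan triple derivation in the usual sense on the $2$-torsion free semiprime (in fact prime) algebra $X$, so \cite[Theorem 4.3]{bre} applies and $h$ is a derivation. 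The first two steps are routine adaptations of the proofs of Theorems \ref{th1} and \ref{th2}; the additivity argument via the idempotent is where the real work lies, which is precisely why the hypothesis of a nontrivial idempotent — absent from the semiprime results above — is imposed.
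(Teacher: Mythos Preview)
Your argument is correct and matches the paper's proof almost step for step: derive the doubling estimate from $x=0$ in \eqref{4m}, form $H(x)=\lim_{k}2^{-k}h(2^{k}x)$, use \eqref{42} and \eqref{43} to obtain the two Jordan triple identities for $H$ (one with $H(y)$, one with $h(y)$ in the middle), subtract and evaluate at $x=\mathbf{1}$ to get $H=h$, and finish with \cite[Theorem~4.3]{bre}. The one divergence is the additivity step. You correctly isolate it as the place where the nontrivial idempotent is essential and propose to run a Peirce-decomposition/Martindale-style argument by hand; the paper instead dispatches it in one line by citing \cite[Corollary~3.9]{jif}, which says precisely that on a $2$-torsion free prime ring with a nontrivial idempotent, any map $G$ satisfying $G(xyx)=G(x)yx+xG(y)x+xyG(x)$ is automatically additive. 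Your sketch is essentially the content of that reference, so the ``real work'' you anticipate is already packaged, and the detour through $h(\mathbf{1})=0$ and $h(x^{2})=h(x)x+xh(x)$ becomes unnecessary. (The paper also orders things slightly differently, proving additivity of $H$ before identifying $H=h$, but since they coincide this is immaterial.)
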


\begin{proof}Putting $x=0$ in \eqref{4m}, we have
\begin{equation}\label{42mul}
\Vert 2h(y)-h(2y)\Vert\leq\frac 1{m^2}\phi(0,y)
\end{equation}
for all $y\in X$. Letting $y=2^kx$ in \eqref{42mul} and then dividing by $2^{k+1}$, we get
\begin{equation*}
\left\Vert \frac 1{2^k}h\Big(2^kx\Big)-\frac 1{2^{k+1}}h\Big(2^{k+1}x\Big)\right\Vert\leq\frac 1{2^{k+1}m^2}\phi\Big(0,2^kx\Big)
\end{equation*}
for all $x\in X$ and non-negative integer $k$. So, we obtain
\begin{equation*}
\left\Vert \frac 1{2^k}h\Big(2^kx\Big)-\frac 1{2^{k+j}}h\Big(2^{k+j}x\Big)\right\Vert\leq\frac 1{m^2}\left[\frac{\phi\Big(0,2^{k+1}x\Big)}{2^{k+1}}+\cdots+\frac{\phi\Big(0,2^{k+j}x\Big)}{2^{k+j}}\right].
\end{equation*}
This implies that $\left\{\frac {h(2^kx)}{2^k}\right\}$ is a Cauchy sequence in $X$ by \eqref{41}. Hence, there exists a mapping $H$ such that
\begin{equation*}
H(x):=\lim_{k\to\infty}\frac {h(2^kx)}{2^k}\qquad (x\in X).
\end{equation*}
On the other hand,
\begin{align*}
{}&\Vert H(xyx)-H(x)yx-xH(y)x-xyH(x)\Vert\\
&=\lim_{k\to\infty}\left\Vert\frac1{8^k}\Big[h\Big(8^kxyx\Big)-4^kh\Big(2^kx\Big)yx-4^kxh\Big(2^ky\Big)x-
4^kxyh\Big(2^kx\Big)\Big]\right\Vert\\
&\leq\liminf_{k\to\infty}\frac 1{8^k}\psi\Big(2^kx,2^ky\Big)=0.\\
\end{align*}
Thus
\begin{equation}\label{4xyx}
H(xyx)=H(x)yx+xH(y)x+xyH(x)
\end{equation}
for all $x,y\in X$. By \cite[Corollary 3.9]{jif}, which asserts that for a $2$-torsion free prime ring $R$ containing a nontrivial idempotent element, every mapping $G:R\longrightarrow R$ satisfying \eqref{4xyx} is additive. In particular, the mapping $H:X\longrightarrow X$ is additive. Hence, it follows that $H$ is a Jordan triple derivation. Now, by \eqref{43} we have
\begin{align*}
{}&\Vert H(xyx)-H(x)yx-xh(y)x-xyH(x)\Vert\\
&=\lim_{k\to\infty}\left\Vert\frac1{4^k}\Big[h\Big(4^kxyx\Big)-2^kh\Big(2^kx\Big)yx-4^kxh(y)x-2^kxyh\Big(2^kx\Big)\Big]\right\Vert\\
&\leq\liminf_{k\to\infty}\frac 1{4^k}\psi\Big(2^kx,y\Big)=0,
\end{align*}
which implies that
\begin{equation}\label{4xfx}
H(xyx)=H(x)yx+xh(y)x+xyH(x)
\end{equation}
for all $x,y\in X$. By \eqref{4xyx} and \eqref{4xfx}, we have $x\Big(h(y)-H(y)\Big)x=0$ for all $x,y\in X$. Putting $x=\bf{1}$, we see that $h=H$ itself is a Jordan triple derivation. Therefore, the conclusion follows directly by \cite[Theorem 4.3]{bre} (see the proof of Corollary \ref{co1}).
\end{proof}


\begin{theorem}\label{th6} Let $X$ be a unital prime Banach algebra over $\mathbb{C}$ with a nontrivial idempotent element. Let $h:X\longrightarrow X$ be an odd mapping and $\phi,\psi:X^2\longrightarrow [0,\infty)$ satisfy
\begin{equation*}
\Vert h(xyx)-h(x)yx-xh(y)x-xyh(x)\Vert\leq\psi(x,y),
\end{equation*}
\begin{equation}\label{5m}
\left\Vert h\Big(\mu(x+my)\Big)+h\Big(\mu(x-my)\Big)-\mu\Big(2h(x)-2m^2h(y)+m^2h(2y)\Big)\right\Vert\leq\phi(x,y)
\end{equation}
for all $x,y\in X, \mu\in\mathbb {T}_{n_0}^1$ and some even integer constant $m\neq 0$.
Assume that \eqref{41}, \eqref{42} and \eqref{43} hold with
\begin{equation}\label{53}
\liminf_{k\to\infty}2^{-k}\phi(2^kx,0)=0.
\end{equation}
for all $x,y\in X$. Then, $h$ is a  linear derivation.
\end{theorem}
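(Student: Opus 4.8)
The plan is to reduce Theorem~\ref{th6} to Theorem~\ref{th5} together with Lemma~\ref{le3}. First I would extract from the case $\mu=1$ that $h$ is a derivation (and in particular additive), and then exploit the extra freedom of letting $\mu$ range over $\mathbb{T}_{n_0}^1$ to upgrade $h$ to a $\mathbb{C}$-linear map.

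Concretely, since $1=e^{i0}\in\mathbb{T}_{n_0}^1$, specialising $\mu=1$ in \eqref{5m} gives exactly the inequality \eqref{4m} of Theorem~\ref{th5}. All the remaining hypotheses of Theorem~\ref{th5}, namely \eqref{41}, \eqref{42} and \eqref{43}, are assumed here verbatim, $X$ is a unital prime Banach algebra with a nontrivial idempotent element, $m\neq0$ is an even integer, and $h$ is odd. Hence Theorem~\ref{th5} applies and $h$ is a derivation; in particular $h$ is additive, so $h(0)=0$ and $h(2^k z)=2^k h(z)$ for all $z\in X$ and all $k\in\mathbb{N}$.

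Next I would put $y=0$ in \eqref{5m}. Using $h(0)=0$ this collapses to
\begin{equation*}
\Vert 2h(\mu x)-2\mu h(x)\Vert\leq\phi(x,0)
\end{equation*}
for all $x\in X$ and all $\mu\in\mathbb{T}_{n_0}^1$. Replacing $x$ by $2^k x$, using the additivity of $h$ so that $h(\mu 2^k x)=2^k h(\mu x)$ and $h(2^k x)=2^k h(x)$, and then dividing by $2^{k+1}$, we obtain
\begin{equation*}
\Vert h(\mu x)-\mu h(x)\Vert\leq 2^{-k-1}\phi\bigl(2^k x,0\bigr)
\end{equation*}
for every $x\in X$, every $\mu\in\mathbb{T}_{n_0}^1$ and every $k\in\mathbb{N}$. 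Letting $k\to\infty$ and invoking \eqref{53} forces $h(\mu x)=\mu h(x)$ for all $x\in X$ and all $\mu\in\mathbb{T}_{n_0}^1$. Since $h$ is additive, Lemma~\ref{le3} then shows that $h$ is $\mathbb{C}$-linear, and as it is already a derivation we conclude that $h$ is a linear derivation.

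I do not expect a genuine obstacle: the analytic substance is already carried by Theorem~\ref{th5}. The only point that requires a little care is the order of the two arguments — one must run Theorem~\ref{th5} first to obtain additivity, because the $\mathbb{T}_{n_0}^1$-homogeneity argument above relies on pulling the scalar $2^k$ out of $h(\mu 2^k x)$ (not merely out of $h(2^k x)$), and a priori $h$ is only assumed to be odd. Once additivity is in hand, everything reduces to the routine limiting argument built on \eqref{53}.
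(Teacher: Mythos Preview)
Your proposal is correct and follows essentially the same route as the paper: first invoke Theorem~\ref{th5} (via the specialisation $\mu=1$) to get that $h$ is a derivation and hence additive, then set $y=0$ in \eqref{5m}, use additivity to rescale by $2^{-k}$, pass to the limit via \eqref{53}, and conclude $\mathbb{C}$-linearity from Lemma~\ref{le3}. Your write-up is in fact more explicit than the paper's at the step where one pulls $2^k$ out of $h(\mu\,2^k x)$, and your remark that additivity must be established first for this to work is well placed.
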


\begin{proof} By Theorem \ref{th5}, $h$ is a  derivation. Letting $y=0$ in \eqref{5m}, we have
$$\Vert 2h(\mu x)-2\mu h(x)\Vert\leq\phi(x,0).$$
for all $x\in X$. Then, by applying the additivity of $h$ and \eqref{53}, it follows that $h(\mu x)=\mu h(x)$ for all $x\in X$ and $\mu \in\mathbb {T}_{n_0}^1$. Therefore,  by Lemma \ref{le3} $h$ is linear.
\end{proof}

The following corrollaries are the direct consequences of Theorem \ref{th5} and Theorem \ref{th6}, respectively. So, we omit their proof.

\begin{corollary} Let $X$ be a unital prime Banach algebra with a nontrivial idempotent element. Let $h:X\longrightarrow X$ be an odd mapping  such that
\begin{equation*}
\Vert h(xyx)-h(x)yx-xh(y)x-xyh(x)\Vert\leq\theta(\Vert x\Vert^p+\Vert y\Vert^q)
\end{equation*}
\begin{equation*}
\left\Vert h(x+my)+h(x-my)-2h(x)+2m^2h(y)-m^2h(2y)\right\Vert\leq\theta(\Vert x\Vert^p+\Vert y\Vert^q)
\end{equation*}
for all $x,y\in X$, for some constants $\theta, p, q, m$, where $m$ is nonzero even integer and $p<2, q<1$. Then, $h$ is a derivation.
\end{corollary}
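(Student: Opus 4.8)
The plan is to deduce this directly from Theorem \ref{th5} by choosing the control functions to be
\[
\psi(x,y)=\phi(x,y)=\theta\bigl(\Vert x\Vert^p+\Vert y\Vert^q\bigr).
\]
With this choice the two inequalities that set up Theorem \ref{th5}, namely \eqref{4trd} and \eqref{4m}, are exactly the two hypotheses assumed in the corollary, and the remaining structural requirements of that theorem are already in place: $h$ is odd and $m$ is a nonzero even integer. Thus the only thing I would still have to do is verify the three convergence conditions \eqref{41}, \eqref{42} and \eqref{43}; each of these collapses to an elementary statement about geometric sequences, and this is precisely where the exponent bounds $p<2$ and $q<1$ are used.

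Concretely, for \eqref{41} I would use $\Vert 0\Vert^p=0$ to write $\phi(0,2^kx)=\theta\,2^{kq}\Vert x\Vert^q$, so that $\Phi(x)=\theta\Vert x\Vert^q\sum_{k=0}^{\infty}2^{k(q-1)}<\infty$ since $q<1$. For \eqref{42} I would compute $2^{-3k}\psi(2^kx,2^ky)=\theta\bigl(2^{k(p-3)}\Vert x\Vert^p+2^{k(q-3)}\Vert y\Vert^q\bigr)$, which tends to $0$ as $k\to\infty$ because $p<2<3$ and $q<1<3$. For \eqref{43} I would compute $2^{-2k}\psi(2^kx,y)=\theta\bigl(2^{k(p-2)}\Vert x\Vert^p+2^{-2k}\Vert y\Vert^q\bigr)\to 0$, using $p<2$. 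In all three cases the limit is genuinely $0$, so in particular the $\liminf$ appearing in \eqref{42} and \eqref{43} is $0$ as required, and Theorem \ref{th5} then yields that $h$ is a derivation.

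I do not expect any real obstacle here: the analytic substance lives entirely in Theorem \ref{th5}, and the corollary is just a matter of matching power-type bounds against its hypotheses. The only place to be mildly careful is keeping track of the signs of the exponents $p-3$, $q-3$ and $p-2$ in the three limits — the assumptions $p<2$ and $q<1$ are chosen exactly so that all of them are negative — and, implicitly, reading $p$ and $q$ as positive so that the terms evaluated at the zero vector vanish and the estimates remain valid everywhere on $X$.
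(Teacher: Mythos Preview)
Your proposal is correct and matches the paper's approach exactly: the paper states that this corollary is a direct consequence of Theorem \ref{th5} and omits the proof, and you have simply written out the routine verification of conditions \eqref{41}--\eqref{43} for the power-type control functions $\phi=\psi=\theta(\Vert x\Vert^p+\Vert y\Vert^q)$. Your observation that one should implicitly read $p,q>0$ (so that $\Vert 0\Vert^p=0$ and the bounds are defined on all of $X$) is a reasonable clarification.
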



\begin{corollary} Let $X$ be a unital prime Banach algebra over $\mathbb{C}$ with a nontrivial idempotent element. Let $h:X\longrightarrow X$ be an odd mapping  such that
\begin{equation*}
\Vert h(xyx)-h(x)yx-xh(y)x-xyh(x)\Vert\leq\theta(\Vert x\Vert^p+\Vert y\Vert^q)
\end{equation*}
\begin{equation*}
\left\Vert h\Big(\mu(x+my)\Big)+h\Big(\mu(x-my)\Big)-\mu\Big(2h(x)-2m^2h(y)+m^2h(2y)\Big)\right\Vert\leq\theta(\Vert x\Vert^p+\Vert y\Vert^q)
\end{equation*}
for all $x,y\in X, \mu\in\mathbb{T}_{n_0}^1$, for some constants $\theta, p, q, m$, where $m$ is nonzero even integer and $p<1, q<1$. Then, $h$ is a linear derivation.
\end{corollary}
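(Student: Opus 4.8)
The plan is to recognize the statement as a direct specialization of Theorem \ref{th6}. Simply put
$$\phi(x,y)=\psi(x,y)=\theta\bigl(\Vert x\Vert^p+\Vert y\Vert^q\bigr)\qquad(x,y\in X),$$
and check that all the hypotheses \eqref{41}, \eqref{42}, \eqref{43} and \eqref{53} of that theorem are fulfilled under the constraints $p<1$ and $q<1$. Since $h$ is odd we already have $h(0)=0$, so the remaining standing assumption of Theorem \ref{th6} holds too. (We of course take $\theta\geq 0$ and evaluate $\phi,\psi$ only at the families of points occurring below, so the usual convention for $\Vert 0\Vert^p$ when $p\le 0$ causes no difficulty, oddness handling the value at $0$ in any case.)

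The verifications are routine. For \eqref{41} one computes $\phi(0,2^kx)=\theta\,2^{kq}\Vert x\Vert^q$, so $\Phi(x)=\theta\Vert x\Vert^q\sum_{k=0}^{\infty}2^{k(q-1)}<\infty$ because the common ratio $2^{q-1}$ is $<1$ precisely when $q<1$. For \eqref{42}, $2^{-3k}\psi(2^kx,2^ky)=\theta\bigl(2^{k(p-3)}\Vert x\Vert^p+2^{k(q-3)}\Vert y\Vert^q\bigr)\to 0$ since $p,q<1<3$. For \eqref{43}, $2^{-2k}\psi(2^kx,y)=\theta\bigl(2^{k(p-2)}\Vert x\Vert^p+2^{-2k}\Vert y\Vert^q\bigr)\to 0$ since $p<2$. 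Finally, for \eqref{53}, $2^{-k}\phi(2^kx,0)=\theta\,2^{k(p-1)}\Vert x\Vert^p\to 0$ since $p<1$.

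Having verified every hypothesis, Theorem \ref{th6} applies verbatim and yields that $h$ is a linear derivation, which completes the proof. There is essentially no obstacle; the only conceptual point worth flagging is the provenance of the two numerical bounds. The condition $q<1$ is exactly what makes the defining series $\Phi$ in \eqref{41} convergent, and hence what makes the Cauchy-sequence construction of $H$ in the proof of Theorem \ref{th5} go through. The condition $p<1$ is exactly \eqref{53}, the decay requirement that, via the $\mathbb{T}_{n_0}^1$-homogeneity argument through Lemma \ref{le3}, upgrades the conclusion from ``derivation'' to ``linear derivation''. The remaining decay requirements \eqref{42} and \eqref{43} hold comfortably under these same bounds.
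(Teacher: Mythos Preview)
Your proof is correct and follows exactly the route the paper intends: the paper omits the argument entirely, remarking only that the corollary is a direct consequence of Theorem~\ref{th6}, and you have simply supplied the routine verifications of \eqref{41}--\eqref{43} and \eqref{53} for the specific choice $\phi=\psi=\theta(\Vert x\Vert^p+\Vert y\Vert^q)$. The explanations of why $q<1$ (for convergence of $\Phi$) and $p<1$ (for \eqref{53}) are the sharp constraints are accurate and helpful.
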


\section*{Acknowledgment}
The authors express their sincere thanks to the reviewers for the careful and detailed reading of the manuscript and very
helpful suggestions that improved the manuscript substantially.

\providecommand{\bysame}{\leavevmode\hbox
to3em{\hrulefill}\thinspace}

\end{document}